\documentclass{article}

\usepackage{arxiv}

\usepackage{natbib}

\usepackage{microtype}
\usepackage{graphicx}
\usepackage{subfigure}
\usepackage{booktabs} 
\usepackage{hyperref}

\usepackage{amsmath}
\usepackage{amssymb}
\usepackage{mathtools}
\usepackage{amsthm}

\usepackage[capitalize,noabbrev]{cleveref}

\theoremstyle{plain}
\newtheorem{theorem}{Theorem}[section]
\newtheorem{proposition}[theorem]{Proposition}

\theoremstyle{definition}
\newtheorem{definition}[theorem]{Definition}

\theoremstyle{remark}

\theoremstyle{remark}
\newtheorem{example}[theorem]{Example}

\usepackage[textsize=tiny]{todonotes}

\usepackage{algorithm}
\usepackage{algorithmic}
\usepackage{nicefrac} 
\usepackage{enumitem}
\usepackage{color}
\usepackage{multirow}
\usepackage{nccmath, amssymb}
\usepackage{bbold}
\usepackage{thmtools,thm-restate}
\usepackage{dblfloatfix} 
\usepackage{graphicx} % Required for inserting images
\usepackage{colortbl}
\usepackage{booktabs}
\usepackage{amsfonts}       % blackboard math symbols
\usepackage{amsthm}
\usepackage{amsmath}
\usepackage{amssymb}
\usepackage{xcolor}         % colors
\usepackage{xspace}
\usepackage{xparse}         % for \NewDocumentCommand
\usepackage{cleveref}       % \cref \Cref commands
\usepackage{stmaryrd}       % For \llbracket etc
\usepackage{bm}
\usepackage{nccmath, amssymb}
\usepackage{cases}
\usepackage{bbold}
\usepackage{pifont}% http://ctan.org/pkg/pifont
\DeclareMathOperator*{\dom}{\ensuremath{\text{\rm dom}}}

\DeclareMathOperator*{\range}{\ensuremath{\text{\rm Im}}}

\DeclareMathOperator*{\cl}{\ensuremath{\text{\rm cl}}}

\DeclareMathOperator*{\Span}{\ensuremath{\text{\rm span}}}

\DeclareMathOperator*{\Spec}{\ensuremath{\text{\rm Sp}}}

\DeclareMathOperator*{\fov}{\ensuremath{\text{ \rm W}}}

\newcommand{\Id}{I}

\providecommand{\norm}[1]{\ensuremath{\left\|#1\right\|}}
\providecommand{\SVDr}[1]{[\![#1]\!]_r}
\providecommand{\abs}[1]{\lvert#1\rvert}
\newcommand{\scalarp}[1]{{\langle #1\rangle}}
\newcommand{\R}{\mathbb R}
\newcommand{\C}{\mathbb C}
\newcommand{\N}{\mathbb N}
\newcommand{\bigO}{\mathcal O}

\newcommand{\EE}{\ensuremath{\mathbb E}}
\newcommand{\PP}{\ensuremath{\mathbb P}}

\newcommand{\HS}[1]{{\rm{HS}}\left(#1\right)} %Hilbert-Schmidt space on the domain

\newcommand{\im}{\pi} %invariant measure
\newcommand{\TOp}{A}  %Transfer operator on L2
\newcommand{\generator}{L} %Infinitesimal generator
\newcommand{\dt}{\Delta t} %sampling time
\newcommand{\spX}{\mathcal{X}} % space X
\newcommand{\RKHS}{\mathcal{H}} % RKHS
\newcommand{\Lii}{\mathcal{L}^2_\im(\spX)} % L2 space 
\newcommand{\Liishort}{\mathcal{L}^2_\im} % L2 space 
\newcommand{\Liio}{\mathcal{L}^{2}_{\im,0}(\spX)} % L2 space 
\newcommand{\Liioshort}{\mathcal{L}^{2}_{\im,0}} % L2 space 
\newcommand{\Wii}{\mathcal{W}^{1,2}_\im(\spX)} % Sobolev space

\newcommand{\Wiio}{\mathcal{W}^{1,2}_{\im,0}(\spX)} % Sobolev space

\newcommand{\scalarpH}[1]{{\langle #1\rangle}_{\RKHS}}

\newcommand{\shift}{\mu}
\newcommand{\weight}{a}
\newcommand{\reg}{\gamma}

\newcommand{\fH}{\phi}
\newcommand{\IfH}{{\psi}_{\weight}}

\newcommand{\HKoop}{\Estim_{\RKHS}}  %Koopman operator RKHS
\newcommand{\LKoop}{A}

\newcommand{\Cx}{C}
\newcommand{\Yx}{W_\weight}

\newcommand{\Cxy}[1]{\Cx_{#1}}

\newcommand{\Rx}{R_\shift}

\newcommand{\Data}{\mathcal{D}_n}

\newcommand{\ECx}{\widehat{C} } %Empirical Covariance of inputs 
\newcommand{\EYx}{\widehat{W}_\weight} %Emp Covariance of outputs 
\newcommand{\ECxy}[1]{\widehat{C}_{#1}}

\newcommand{\toeplitz}{\textsc{T}}
\newcommand{\toepsymb}{T}
\newcommand{\anfun}{F}

\newcommand{\Kx}{\textsc{K}}
\newcommand{\Kreg}{\Kx_{\reg}}

\newcommand{\ET}{\toeplitz_n}
\newcommand{\Estim}{G}  %Estimator
\newcommand{\EEstim}{\widehat{G}}  %Empirical estimator of the Koopman operator

\newcommand{\RRR}{\Estim^{r}_{\shift,\reg}}  %Estimator RRR
\newcommand{\ERRR}{\EEstim^{r}_{\weight,\reg}}  %Emp. Estimator RRR

\newcommand{\TS}{S_\im}  % Cannonical injection into L2
\newcommand{\ES}{\widehat{S}} % Sampling operator

\newcommand{\TJ}{J_\im}
\newcommand{\EJ}{\textsc{J}_n}

\newcommand{\Risk}{\mathcal{R}} % Risk
\newcommand{\ExRisk}{\mathcal{R}_{\rm ex}} % Exces Risk
\newcommand{\levec}{w^{\ell}}
\newcommand{\revec}{w^{r}}

\newcommand{\erefun}{\widehat{h}}
\newcommand{\lefun}{g}
\newcommand{\elefun}{\widehat{g}}
\newcommand{\gefun}{f}

\newcommand{\lgefun}{g}

\newcommand{\eval}{\lambda}
\newcommand{\geval}{\lambda}
\newcommand{\feval}{\nu}
\newcommand{\efeval}{\widehat{\feval}}

\newcommand{\eeval}{\widehat{\eval}}

\newcommand{\one}{\mathbb 1}

\newcommand{\U}{\textsc{U}}
\newcommand{\V}{\textsc{V}}
\newcommand{\W}{\textsc{W}}
\newcommand{\Z}{\textsc{Z}}

\newcommand{\betaLang}{(k_bT/\gamma)}

\usepackage[textsize=tiny]{todonotes}

\begin{document}

\title{Toeplitz Based Spectral Methods \\ for Data-driven Dynamical Systems}

% It is OKAY to include author information, even for blind
% submissions: the style file will automatically remove it for you
% unless you've provided the [accepted] option to the icml2025
% package.

% List of affiliations: The first argument should be a (short)
% identifier you will use later to specify author affiliations
% Academic affiliations should list Department, University, City, Region, Country
% Industry affiliations should list Company, City, Region, Country

% You can specify symbols, otherwise they are numbered in order.
% Ideally, you should not use this facility. Affiliations will be numbered
% in order of appearance and this is the preferred way.

\author{Vladimir R. Kostic \\ {Istituto Italiano di Tecnologia} \\ {University of Novi Sad} \\ {\tt \small vladimir.kostic@iit.it}\\
\And
Karim Lounici \\
CMAP, École Polytechnique \\
{\tt \small karim.lounici@polytechnique.edu}\\
\And
Massimiliano Pontil \\ {Istituto Italiano di Tecnologia} \\ {University College London} \\ {\tt \small massimiliano.pontil@iit.it}
}

%\icmlcorrespondingauthor{Firstname2 Lastname2}{first2.last2@www.uk}

% You may provide any keywords that you
% find helpful for describing your paper; these are used to populate
% the "keywords" metadata in the PDF but will not be shown in the document
\keywords{Eigenvalue Problems \and Dynamical Systems \and Machine Learning}

% this must go after the closing bracket ] following \twocolumn[ ...

% This command actually creates the footnote in the first column
% listing the affiliations and the copyright notice.
% The command takes one argument, which is text to display at the start of the footnote.
% The \icmlEqualContribution command is standard text for equal contribution.
% Remove it (just {}) if you do not need this facility.

%\printAffiliationsAndNotice{}  % leave blank if no need to mention equal contribution

\maketitle

\begin{abstract}
We introduce a Toeplitz-based framework for data-driven spectral estimation of linear evolution operators in dynamical systems. Focusing on transfer and Koopman operators from equilibrium trajectories without access to the underlying equations of motion, our method applies Toeplitz filters to the infinitesimal generator to extract eigenvalues, eigenfunctions, and spectral measures. Structural prior knowledge, such as self-adjointness or skew-symmetry, can be incorporated by design. The approach is statistically consistent and computationally efficient, leveraging both primal and dual algorithms commonly used in statistical learning.  Numerical experiments on deterministic and chaotic systems demonstrate that the framework can recover spectral properties beyond the reach of standard data-driven methods. 
\end{abstract}

\section{Introduction}

Computing eigenvalues and eigenfunctions of linear, possibly differential, operators is centerpiece in numerical analysis. Classical approaches are typically based on discretization, notably finite element methods. However, these methods suffer from the curse of dimensionality and in high dimensions data-driven approaches or Montecarlo simulations are often preferred.

In this paper we focus on a class of linear operators associated with Markov processes, such as transfer or Koopman operators, which evolve functions of the state (observables) over time.  These operators are associated with stochastic, ordinary or partial differential equations, including the Langevin dynamics and the Navier Stoke equations, and their spectral structure is key to understanding global system properties and forecasting future states. Important examples arise in computational physics, such as molecular dynamics \citep{SCHUTTE2003699} and climate modeling \citep{majda2009normal}, as well as in finance \citep{karatzas1991brownian}, among many others.
%In particular, the fields of molecular dynamics has particularly benefited from spectral decomposition methods of Markov semigroups. \cite{Schutte2001,devergne2024biased} to demonstrate the effectiveness of IG-based methods in accelerating simulations and enabling the practical identification of metastable states. Likewise in climate modelling data-driven approches could be unlock... 

While there is a large body of work on solving such eigenvalue problems using classical numerical methods, here we address scenarios in which {\em the equations of motion are unknown}, precluding direct discretization-based approaches. Moreover, we consider potentially high dimensional systems, where data driven approaches offer a means to overcome the curse of dimensionality \citep{Kostic2024diffusion}. We consider the setting in which one or multiple trajectories of the system at equilibrium are available, from which we wish to learn the operator spectrum. Current data-driven methods such as DMD, eDMD, or RRR, are designed to estimate the transfer or Koopman operators from data \citep[see, e.g.,][and references therein]{Bevanda2021,Brunton2022,Das2020,klus2018data,kohne2025error,Kostic_2023_learning,philipp2025error}. Many of these approaches can be interpreted as projection or Arnoldi-type schemes, closely related to Krylov subspace methods %\massi{more refs?} 
in numerical linear algebra \citep[see, e.g.,][]{Rowley2009}. However, when the time lag is very small, a computational bottleneck arises in the learning process, which has led to alternative approaches based on resolvent operators and Laplace transforms \citep{kostic2025laplace}. Related frequency-domain and resolvent-based perspectives for Koopman spectral analysis have also been explored in \citep{giannakis2019data}, among others.

These developments motivate us to consider a spectral estimation framework that encompasses more general analytic transforms like the exponential or the shifted inverse, to extract eigenvalues and eigenfunctions, when they exist, or spectral measures for deterministic systems in chaotic regimes. While this problem remains open in full generality, here we bridge well-established numerical methods 
%that build Krylov subspaces by 
based on 
%polynomial or 
Toeplitz linear algebra in order to design data-driven spectral estimation methods for dynamical systems observed in their stationary regime.

Starting from a Toeplitz symbol $\toepsymb$ on the unit circle we build filters that we apply to the generator $\generator$ of a dynamical system at equilibrium. Specifically the main idea is, given a time-lag $\dt>0$, to estimate the operator $\anfun(L):=\toepsymb(A_{\dt})$, 
%operator $A_{\dt} = e^{\dt \generator}$ being 
where $A_{\dt} = e^{\dt \generator}$ is 
the transfer operator at time-lag $\dt$, from a trajectory of equally $\dt$-time spaced data $\Data=\{x_i\}_{i=1}^{n}$ obtained from a stationary distribution of the system.  
Since the data sequence is a realization of random variables $(X_{i\dt})_{i\in\N}$, we fix a representation $\phi$ mapping each $X_{i\dt}$ to a function $\phi(X_{i\dt})$ in some hypothesis class of functions $\RKHS$, and observe that the adjoint of the transfer operator $A_{\dt}$ acts as the \textit{expected shift in time}, that is $\EE[\phi(X_{(i+j)\dt}\,\vert\,X_i]=A_{j\Delta t}^* \phi(X_i)$. In another words, the transfer or Koopman operator for a time-lag $j\dt$ (which we can estimate from the trajectory data) act on a data matrix of time-ordered feature maps (in expectation) as the shift by \(j\) column indices, which we can express by the multiplication with a unit $j$-th diagonal matrix $\textsc{D}_j$ as
\begin{equation}\label{eq:main_idea}
\EE\Big[A_{j\dt}^*\;[\phi(X_{\dt})\,\vert\,\phi(X_{2\dt}),\vert\cdots \vert \phi(X_{\ell\dt})\,\vert\,0\,\vert\cdots\vert\,0\,]\Big] = \EE\,[\phi(X_{\dt})\,\vert\,\phi(X_{2\dt}),\vert\cdots \vert \phi(X_{n\dt})]\, \textsc{D}_{-j}.  
\end{equation}
As a result, different finite polynomial, Chebyshev or trigonometric expansions of \(\anfun\) induce linear operators whose action on the feature maps is represented by banded Toeplitz matrices with diagonals indexed by the time lags \(j\) and weights given by the coefficients of the expansion. 

To summarize the fundamental idea of this paper: \textbf{the functional calculus for the generator of dynamics transforms into structured linear algebra on transfer operator semigroup}.
This approach naturally allows to build data-driven estimators based on time-lag cross covariance operators that can be empirically estimated. Notably, if the Toeplitz symbol is carefully chosen, our approach incorporates prior knowledge on the system and its operator, such as self-adjointness or skew-symmetry, resulting in better eigenvalue estimation. For instance, for deterministic systems the transfer operator is unitary and the eigenvalues lie on the unit circle and our approach preserves this property. Moreover, we show that the derived estimators can be implemented with efficient Toepliz matrix computations, leveraging primal or dual algorithms from statistical learning. Finally, we show that the estimates are statistically consistent.

The paper is organized as follows. Section \ref{sec:background} provides background material and defines linear evolution operators and their key properties, notably their spectral decomposition. Section \ref{sec:problem} introduces the main idea behind Toeplitz-based spectral methods and the statistical learning setting. Section \ref{sec:methods} presents our estimators, the corresponding primal and dual algorithms and comments on computational considerations. Section \ref{sec:consistency} establishes the statistical consistency of the proposed general approach, presents flagship choices of spectral filters and discusses their use. Finally, Section \ref{sec:exps} presents numerical experiments on conceptual dynamical system illustrating the potential of the proposed method in comparison to common data-driven approaches. In particular, the experiments suggest that our method is promising, that specific filters can improve not only forecasting, but also yield better estimated eigenvalues, and spectral measures for chaotic systems.

%our method combines multiple TOs at different time-lags through a single matrix product between a Toeplitz matrix and the kernel embedding,

%In the future our approach could unlock methods to find invariant subspaces which are not dominant such those arising on mixed deterministic-stochastic systems, which reveal important physical properties.
%Furthermore, in the future, our algorithms' complexity could be further reduced while preserving accuracy by utilizing standard scaling techniques such as random Fourier features or other randomized methods that have already been used for the transfer operator \citep{meanti,Turri}.

\section{Linear operator perspective to stable dynamical systems}\label{sec:background}% (1pg)}

%\VK{[Content: (1) Background:  Markov semigroups and generators, stability via invariant measure, geometric ergodicity, spectral decomposition, sectorial generators, link to SDEs, examples; (2) Problem: learning from data, RKHS, restrictions of generator vs resolvent to RKHS, risk, difficulty of integral]}

Many temporal phenomena in science and engineering evolve as deterministic or stochastic processes
$X = (X_t)_{t \ge 0}$ taking values in a state space $\mathcal X \subset \mathbb{R}^d$.
We focus on continuous-time Markov processes with continuous paths, including It\^o diffusions
and their deterministic limits.

%Many temporal phenomena in science and engineering evolve as deterministic or stochastic processes $X=(X_t)_{t\ge0}$ in a state space $\spX \subset \mathbb{R}^d$. We focus on continuous-time Markov processes with continuous paths, which include Itô diffusions, reflected or time-changed Brownian motions, and, in the zero-diffusion limit, deterministic flows, among others.  

The dynamics of $X$ is described by the transition densities $(p_t)_{t\ge0}$, so that 
%
%\begin{equation}\label{Eq: semigroup def}
$\PP(X_t\in E|X_0=x)=\textstyle{\int_E} p_t(x,y)dy$,  %\end{equation}
and the associated \emph{transfer operators} $(\TOp_t)_{t\ge0}$ acting on observables $f:\spX \to \R$ via
%and \textit{transfer operators} (TO) $(A_t)_{t\in\R_+}$ such that for all $t\!\in\!\R_+$, $E\in\mathcal B(\spX)$, $x\in\spX$ and measurable function $f\colon\!\spX\!\to\!\R$,
\begin{equation}
\label{Eq: transfer operator def}
\TOp_tf=\textstyle{\int_\spX} f(y)p_t(\cdot,y)dy=\EE\big[f(X_t)\,|\,X_0=\cdot\big].\end{equation}

Time homogeneity of the Markov process implies that the transition densities satisfy
the Chapman--Kolmogorov equations
$p_{t+s}(x,y)=\int_{\spX} p_t(x,z)p_s(z,y)\,dz$ for all $t,s\ge0$,
which is equivalent to the semigroup property
$\TOp_{t+s}=\TOp_t\TOp_s$.
This property expresses the autonomy of the dynamics and will later be interpreted
as time equivariance of the evolution.

Transfer operators are key to understanding the dynamics of $X$. We study them on $\Lii$, the space of square-integrable functions with respect to an \emph{invariant measure} $\pi$, which satisfies $\TOp_t^* \pi = \pi$ for all $t\ge0$. We assume that $X$ satisfies: (1) \emph{long-term stability}, i.e., convergence in distribution to $\pi$ from any initial state in its support, and (2) a \emph{$\beta$-mixing} property, meaning that the correlation between $f(X_t)$ and $f(X_0)$ decay exponentially in $t$, for all $f \in \Lii$.

The semigroup property of $(\TOp_t)_{t\ge0}$ expresses a fundamental symmetry of autonomous dynamics, namely \emph{time equivariance}. At the level of trajectories, this corresponds to the action of the time-shift operators $(\tau_s X)_t := X_{t+s}$, $s\ge0$, which commute with the forward evolution: shifting the time origin and then propagating observables yields the same result as propagating first and then shifting. Time equivariance is thus a structural property of the dynamics induced by autonomy and holds for both deterministic and stochastic systems, without requiring invertibility or time reversibility. In contrast, \emph{time invariance} refers to stationarity at equilibrium: if $X_0\sim\pi$, then the law of $X_t$ is independent of $t$, equivalently $\TOp_t^*\pi=\pi$ for all $t\ge0$, or $\EE_\pi[f(X_t)]=\EE_\pi[f(X_0)]$ for all $f\in\Lii$. Time equivariance concerns the commutation of evolution with time shifts, while time invariance characterizes equilibrium distributions; the two notions are distinct and coexist in stable stochastic systems, whereas deterministic systems on simple attractors exhibit equivariant dynamics without probabilistic stationarity beyond the invariant measure supported on the attractor.

The process $X$ is also characterized by its \emph{infinitesimal generator} $L$, defined on its domain $\dom(L) \subset \Lii$ by
\[
L f := \lim_{t\to0^+} \frac{\TOp_t f - f}{t}, \quad f \in \mathcal D(L),
\]
with $L$ closed. Under the above assumptions, $(\TOp_t)_{t \geq 0}$ forms a strongly continuous contraction semigroup on $\Lii$, so that, for all $t \ge 0$, $\TOp_t = e^{t L}$ \citep[see e.g.][]{engel2000}.

\textbf{Spectral decomposition.} When continuous for some $\mu \in \mathbb{C}$, the operator $\Rx= (\mu \Id - \generator)^{{-}1}$ is the \textit{resolvent} of $\generator$, and $\rho(\generator)=\big\{\mu \in\mathbb C\,|\, \mu\Id-\generator\, \,\text{is bijective and}\,\Rx  \,\text{is continuous}\big\}$ is called the \textit{resolvent set}. For a sectorial operator, the resolvent is uniformly bounded in $\mu$ outside a sector containing the spectrum. The spectral decomposition of $\generator$ is
%can be written as
%
\begin{equation}\label{Eq: spectral dec generator}
\generator = \textstyle{\sum_{i=0}^{\infty}}\geval_i\,P_i + \int_{\Spec_c(\generator)}\lambda dE(\lambda) 
\end{equation}
where $(\geval_i)_{i \in \mathbb{N}_0}\subset\C$ are the eigenvalues, $P_i$'s are the corresponding spectral projectors, $\Spec_c$ denotes the continuous spectrum and $E$ is the spectral measure.  In the following, to ease the presentation, we assume that all eigenvalues are simple, that is, $P_i=\gefun_i\otimes \lgefun_i$, where $f_i, g_i \in\Lii$ are the corresponding left and right eigenfunctions. 

\textbf{Link with SDEs.} \emph{Itô diffusion processes} %(studied in \citep{Kostic2024diffusion}) 
are a key example of Markov processes, governed by stochastic differential equations (SDEs) of the form
\begin{equation}\label{Eq: SDE} dX_t = a(X_t) dt + b(X_t) dW_t, \quad X_0 = x, \end{equation} 
where $x \in \spX$, $W = (W_t^1, \dots, W_t^p)_{t \in \R^{+}}$ is a standard $p$-dimensional Brownian motion, the drift $a : \spX \to \R^d$ and diffusion $b : \spX \to \R^{d \times p}$ are globally Lipschitz and sub-linear. This ensures a unique solution $X = (X_t)_{t \geqslant 0}$ in $(\spX, \mathcal B(\spX))$. 
%SDEs like \eqref{Eq: SDE} include Langevin dynamics %(see Example \ref{ex: Langevin}) and Ornstein-Uhlenbeck processes. 
%(see Example \label{ex: OU})  with broad applications in science and engineering
The generator $\generator$ associated with \eqref{Eq: SDE} is a second-order differential operator, defined on 
the Sobolev space $\Wii{=}\{f\in\Lii\;\vert\; \norm{f}_{\Liishort}{+}\norm{\nabla f}_{\Liishort}{<}\infty\}$, as
\begin{equation}\label{Eq: def generator coeff} \generator f(x) {=} \nabla f(x)^\top a(x) + \tfrac{1}{2}\mathrm{Tr}\big[b(x)^\top (\nabla^2 f(x)) b(x)\big],\quad f \in \Lii,\, x \in \spX
 \end{equation} 
where $\nabla^2 f {=} (\partial_{ij}^2 f)_{i,j \in [d]}$ is the Hessian of $f$, and $[d]:=\{1,\dots,d\}$. 

In virtue of \eqref{Eq: transfer operator def}, every function that is almost everywhere constant w.r.t. $\im$ is both a left and right eigenfunction of $A_t$, with eigenvalue one, implying that $\generator\one_\pi=0$. Thus, we can focus on the nontrivial part of the generator's spectra. To this end, we let  
$\Liio=\{f\in\Lii\,\vert\, \EE_{X\sim\im}[f(X)]=0\}$ be the subspace of $\Liishort$ orthogonal to $\one_\im$, and $\generator_0\colon \Wiio  \to\Liio$ the $\Liioshort$-generator, where $\Wiio{=}\{f\in\Wii\;\vert\; \EE_{X\sim\im}[f(X)]=0\}$. We also define the deflated operator $\overline{A}_t:=e^{t \generator_0} = J_\im A_t$,  %= J_\im A_t J_\im$, 
where $J_\im\colon\Lii\to\Lii$ is the orthogonal projector onto $\Liio$, that is $J_\im=I-\one_\im\otimes\one_\im$. 

The spectral decomposition of $\generator$ and $\generator_0$ allows one to solve the SDE \eqref{Eq: SDE}. In particular, assuming for simplicity the absence of the continuous spectrum and non-defective discrete spectrum,  
%with the above notation 
we have that
\begin{equation}
\label{eq:solutions}
\EE[f(X_t)\,\vert\,X_0\!=\!x] {=} \EE_{X\sim\im}[f(X)]+ \textstyle{\sum_{i\in\N}} \,e^{\eval_i t}\,\scalarp{\lefun_i,f}_{\Liioshort}\gefun_i(x),\;f\in\Lii.
\end{equation}
We next discuss specific examples of Markov processes which are covered by the methodology presented in this paper; see \citep{lasota1994} for more information.

\medskip

\begin{example}[Overdamped Langevin]\label{ex: Langevin} 
The \emph{overdamped Langevin} dynamics of a particle in a potential $V:\R^d\rightarrow\R$ satisfies \eqref{Eq: SDE} with $a=-\gamma^{-1}\nabla V$ and $b\equiv \sqrt{2\betaLang} I_{d \times d}$, where $\gamma>0$ is a friction coefficient, $T$ is the temperature and $k_b$ is the Boltzmann constant. The invariant measure is the \emph{Boltzmann distribution} $\pi(dx) \propto e^{-V(x)/(k_b T)} dx$.
\end{example}
%The IG $\generator$ for $f \in W^{2,2}$ is $\generator f=-\gamma^{-1}\nabla V^\top \nabla f+\betaLang\Delta f$, for $f\in \Wii$. 
%Since $\int (-\generator f)g\,d\pi
%=-\int \Big[\nabla\Big(\betaLang\nabla f(x)\frac{e^{-{\betaLang}^{-1} V(x)}}{Z}\Big)\Big]g(x)dx
%= \betaLang\int \nabla f^\top \nabla g\,d\pi=\int f(-\generator g)\,d\pi$, the generator $\generator$ is self-adjoint. 
%In dissipative systems, the IG $\generator$ is sectorial, with its spectrum usually in the left half-plane. 
%For confining potentials, the spectrum is discrete, featuring 0 as the largest eigenvalue, which corresponds to the distribution $\im$.

\medskip

\begin{example}[Ornstein-Uhlenbeck process]\label{ex:OU} This process is governed by the SDE \eqref{Eq: SDE} with $a(x)=Ax$ and $b\equiv B$, where $A \in \R^{d\times d}$ and $B \in \R^{d\times d}$ are the drift and diffusion matrices. This models systems like the Vasicek interest rate
%, damped harmonic oscillators, 
and neural dynamics, where fluctuations return to equilibrium.
If the real parts of $A$'s eigenvalues are negative, the process has an invariant Gaussian distribution with covariance $\Sigma_\infty$ satisfying Lyapunov's equation: $A\Sigma_\infty + \Sigma_\infty A^\top = -BB^\top$. %Its IG 
%for $f \in L^2$ is $\generator f(x)=\nabla f(x)^\top Ax + \frac{1}{2}\mathrm{Tr}[B^\top (\nabla^2 f(x)) B]$. The IG 
%has a discrete spectrum with eigenvalues related to the drift matrix $A$, where 0 corresponds to the distribution $\im$, and the rest are negative, reflecting relaxation rates.
\end{example}

In the context of an Itô diffusion \eqref{Eq: SDE} with non-degenerate noise, i.e. $bb^\top$ invertible
%positive definite 
a.s. w.r.t $\im$, the generator $L$ is an elliptic operator which typically has compact resolvent under confining conditions. This implies a \textit{purely discrete spectrum} (eigenvalues with finite multiplicity) accumulating at $-\infty$. However, if the diffusion is removed (i.e., $b=0$) the dynamics becomes deterministic, and the corresponding generator $L = a \cdot\nabla$, again defined on the $\Lii$ space, where $\im$ is now supported on an attractor, undergoes a radical spectral transition.

\medskip
 
\begin{example}[Duffing oscillator]\label{ex: Duffing}
This is a classical example of deterministic system $(X_t)_{t\ge0}$ in $\mathbb{R}^2$, written as the ODE
$\dot{x} = y$, $\dot{y} = -\delta y - \alpha x - \beta x^3 + \gamma \cos(\omega t)$, where $(x(t),y(t)) = X_t$ and $\delta,\alpha,\beta,\gamma,\omega$ are real parameters. For suitable choices, the system has a simple attractor, e.g., a limit cycle or stable fixed point, on which the invariant distribution $\pi$ is supported. While the system in non-autonomous in the coordinates $(x(t),y(t))$, by including the forcing term $\omega(t)=\omega t$ one can build a strongly continuous Markov semigroup and define its generator.
%The infinitesimal generator $\generator$ is give by  \[L f = F \cdot \nabla f, \qquad F(X_t) = (y_t, -\delta y_t - \alpha x_t - \beta x_t^3 + \gamma \cos(\omega t)).\]
\end{example}

% \begin{example}[Lorentz63]\label{ex: Lorentz}
% The \emph{Lorenz '63 system} is a deterministic system $X_t=(x_t,y_t,z_t) \in \mathbb{R}^3$, defined by the ODE: $\dot{x}_t = \sigma(y_t - x_t)$, $\dot{y}_t = x_t(\rho - z_t) - y_t$, $\dot{z}_t = x_t y_t - \beta z_t$. For standard parameters, it has a strange attractor with an invariant measure $\pi$. Transfer operators $\TOp_t$ and their generator $L$ act on $\Lii(\spX)$ via $\generator f = F\cdot \nabla f$, with $F(X_t)$ given by the right-hand side above.   
% \end{example}

%In practice, the above examples may be further observed with additive noise, $Y_t = X_t + \eta_t$, and the dynamics become stochastic, with a corresponding generator and invariant measure $\pi$ that typically smooths the attractor.

\textbf{Deterministic vs. stochastic systems.} For every deterministic dynamical system, the operator $\generator$ is skew-adjoint, implying purely imaginary spectrum and strongly continuous unitary semigroup of the Koopman operators $A_t=e^{t L}$ that are time-reversal equivariant at equilibrium, that is $A_{-t}=A_{t}^*=A_t^{-1}$. For a \em{simple attractor}, e.g., a stable limit cycle as often arising in the context of Example \ref{ex: Duffing}, the spectrum of $\generator_0$ typically remains discrete, as the dynamics are regular and the invariant measure is smooth along the attractor. In contrast, for a \em{chaotic attractor} satisfying strong mixing properties, the $\Lii$-spectrum of $L$ becomes \textit{continuous} on the imaginary axis, typically filling the whole axis.  More precisely, what survives as discrete objects are not classical eigenvalues but \textit{Ruelle–Pollicott resonances}, which are singularities of the meromorphically extended resolvent and lie in the left half-plane; they describe decay rates of correlations but do not correspond to $\Lii$-eigenfunctions \citep{ueda1980steady,guckenheimer2013nonlinear,mezic2020spectrum}. Thus, the presence or absence of noise qualitatively transforms the spectral theory.

%\textbf{Dissipative stochastic systems.} 
In contrast to the above, the infinitesimal generator $L$ of a dissipative stochastic systems at a stationary distribution is not skew-adjoint, but it does possesses a particularly regular spectral structure. In particular, for time-reversal invariant systems, see Example \ref{ex: Langevin}, $\generator$ is self-adjoint and, so, its spectrum lies entirely on the non-positive real axis; it features a simple eigenvalue at zero, corresponding to the steady state, and a spectral gap that governs the slowest exponential relaxation rate. On the other hand, generators on non-equilibrium steady states, arising from external forcing, are no longer time reversible, typically leading to complex eigenvalues that signal oscillatory relaxation modes. Still, these remain bounded by time-scales and the associated functional-analytic framework generally persist. This is known as the class of \textbf{sectorial generators} that generate strongly continuous semigroups, analytic in a sector of the complex plane defined by growth conditions in an angular region, i.e., $\generator$ is a (stable) sectorial operator with angle $\theta \in [0, \pi/2)$, 
\begin{equation}\label{eq:sectorial}
\fov(\generator)\subseteq \C_{\theta}^{-}:=\{z\in\C\;\vert\; \Re(z)\leq0\;\;\wedge\;\;\abs{\Im(z)}\leq - \Re(z) \tan(\theta)\},
\end{equation}
where $\fov(\generator)$ denotes the \emph{numerical range} of $\generator$. This class covers all time-reversal processes (self-adjoint $\generator$), but also important non-time-reversal processes, such as Advection-Diffusion and underdamped Langevin~\citep{kloeden1992}. 

{\textbf{Analytical functions of the generator.} As noted above,  $\generator$ is a (typically unbounded) differential operator. While classical numerical algorithms, such as finite-element methods (FEM), for computing its spectral decomposition have been a cornerstone in scientific computing, they typically suffer form the curse of dimensionality. Hence, the methods of choice in high-dimensional settings are statistical in nature. While an optimal estimation of the spectral components of $L$ in the general setting remains an open problem, recent advances have successfully resolved the dominant spectrum of dissipative stochastic systems \citep{kostic2025laplace}. The core idea is to approximate the generator's resolvent by a polynomial of $A_{\dt}=e^{\dt L}$, where $\dt$ is the time-lag at which the system is observed. Since analytical transforms change only eigenvalues while preserving the eigenfunctions, such a polynomial can be associated to a Toeplitz matrix encoding weighted time-lags used to compute the empirical spectral decomposition. In the following, we show how this idea can be exploited to build empirical estimators for diverse analytical transforms across different types of dynamics, including the above examples, so that empirical estimators preserve key spectral properties of the generator, as summarized in \cref{tab:summary}.}

%incurring the $\Liishort$-error ... \VK{[TBW]}.
%

\begin{table*}[t!]
\centering
\renewcommand{\arraystretch}{1.2} % Adjust this value to increase/decrease vertical spacing
\resizebox{\textwidth}{!}{%
\begin{tabular}{l|cccc>{\columncolor{yellow!20}}c}
Operator $\anfun(L)$ & Toeplitz Symbol & Toeplitz Matrix &  Dominant Spectrum  & Generator Spectrum & Type of Dynamics \\ 
\midrule
\hline
$A_t = e^{\dt L}$ & $\toepsymb(z) = z$ & $\weight_1 = 1$ & right-most & general & general stable  \\ \hline
$A_t = e^{\dt L}$ & $\toepsymb(z) = (z+z^{-1})/2$ & $\weight_{\pm 1} = 1/2$ & largest modulus & real negative & \begin{tabular}[c]{@{}c@{}}
stable stochastic \\ time-reversal invariant \end{tabular}\\ 
\hline
$\sinh{\dt L}$ & $\toepsymb(z) = (z-z^{-1})/2 $ & $\weight_{\pm 1} = \pm 1/2$ &  \begin{tabular}[c]{@{}c@{}}
largest modulus \\ (highest frequencies) \end{tabular} & discrete imaginary & \begin{tabular}[c]{@{}c@{}}
deterministic on \\ simple attractor \end{tabular} \\ \hline
$\cosh{\dt L}$ & $\toepsymb(z) = (z+z^{-1})/2$ & $\weight_{\pm 1} = 1/2$& \begin{tabular}[c]{@{}c@{}}
smallest modulus \\ (lowest frequencies) \end{tabular} & discrete imaginary & \begin{tabular}[c]{@{}c@{}}
deterministic on \\ simple attractor \end{tabular}  \\ \hline

$(e^{\mu}-e^{\Delta t L})^{-1}$ & $\toepsymb(z) = [e^{\mu}-e^{z}]^{-1}$ & \eqref{eq:toep_res_to} & closest to $\mu$ & sectorial or imaginary & general stable \\ \hline

$(\mu-L)^{-1}$ & $\toepsymb(z) = [\mu-{\rm Ln}\, z]^{-1}\quad$ & \eqref{eq:trapezoid} & closest to $\mu$ & sectorial or imaginary & general stable \\ \hline

$(\mu-L)^{-1}$, $\mu>0$ & $\toepsymb(z) = [\mu-{\rm Ln}\frac{z+z^{-1}}{2}]^{-1}$ & \eqref{eq:toep_res_sym} & largest & real & \begin{tabular}[c]{@{}c@{}}
stable stochastic \\ time-reversal invariant \end{tabular} \\ \hline

$P_{(\omega_{\min},\omega_{\max})}\,L_0^{-1}$ & $\toepsymb(z) = \frac{\one_{\{|{\rm Arg}(z)|\in[\omega_{\min}, \omega_{\max}]\}}}{{\rm Ln}\,z}$ & \eqref{eq:toep_inv}& \begin{tabular}[c]{@{}c@{}}
frequencies in the range \\
$(\omega_{\min}/2\pi,\omega_{\max}/2\pi)$ \end{tabular} & discrete imaginary & \begin{tabular}[c]{@{}c@{}}
deterministic on \\ simple attractor \end{tabular} \\ \hline
General $\anfun(\generator)$ & \begin{tabular}[c]{@{}c@{}}
Trigonometric and Chebyshev \\ spectral filters \end{tabular} & \eqref{eq:toep_trig} and \eqref{eq:toep_cheb} 
  & \begin{tabular}[c]{@{}c@{}}
largest $\anfun$-values \\ (largest filtered frequencies) \end{tabular} & imaginary & general deterministic \\ \hline
\end{tabular}
}
\caption{
%Summary of the appropriate 
Instances of Toeplitz based estimators for different types of dynamical systems. Explicit coefficients of the Toeplitz matrices used in Algorithms \ref{alg:primal} and \ref{alg:dual} are found in the referenced equations, $\mu$ is a parameter with non-negative real part, and $P_{(\theta_{\min},\theta_{\max})}$ is the spectral projector onto the frequency band. In all cases, we can learn $\generator_0$ instead of $\generator$ simply by centering the features in the hypothesis space $\RKHS$.
}
\label{tab:summary}
\end{table*}

\section{Statistical learning of analytic transforms of the generator}\label{sec:problem}

Consider a (stochastic) differential equation \eqref{Eq: SDE}, with an invariant measure $\pi$ and  generator $\generator\colon \Wii\to\Lii$. Recalling that the generator's spectra lies in the complex left half-plane, $\Spec(\generator)\subseteq \C^{-}$, and, so, the transfer operators' spectra lie in the unit disc, $\Spec(A_t)\subseteq \mathbb{D}=\{z\in \mathbb{C}\,\vert\, |z|\leq 1\}$, we consider a Toepltiz symbol $\toepsymb\colon \mathbb{T} \to \mathbb{C}$ on  the unit circle $\mathbb{T}\subset\mathbb{C}$,
\begin{equation}\label{eq:toeplitz_symbol}
\textstyle{\toepsymb(z)= \sum_{j\in\mathbb{Z}} \,\weight_j\, z^j,}    
\end{equation}
which we extend to the unit disk by $\toepsymb(z)= \weight_0+\sum_{j\in\N}(\weight_j z^j + \weight_{-j} \bar{z}^j)$, for all $z \in \mathbb{D}$. Furthermore, for every $\ell \in \mathbb{N}$, we define $\toepsymb_\ell(z):= \weight_0+\sum_{j\in[\ell]}(\weight_j z^j + \weight_{-j} \bar{z}^j)$ the $\ell$-{\em truncated symbol}, noting that error bounds $|\toepsymb-\toepsymb_\ell|$ have been studied for rich classes of symbols \citep[see, e.g.,][]{bottcher2005, gray2006toeplitz}.  

Given a time-lag $\dt>0$, the task is to estimate the operator $\anfun(L):=\toepsymb(A_{\dt})$ from a trajectory of equally $\dt$-time spaced data $\Data=\{x_i\}_{i=1}^{n}$ from a stationary distribution $\pi$. Motivating examples are summarized in \cref{tab:summary} and include the classical case of Koopman/Transfer operator estimation for $\toepsymb$ being the identity.

To present the new class of methods, we follow the formalism of learning in Reproducing Kernel Hilbert Spaces (RKHS), well established in the context of transfer operator regression \citep{Kostic2022,kostic2023sharp}. To that end, let $\RKHS$ be an RKHS with kernel $k:\spX\times\spX \to \R$, and $\fH:\spX \to \RKHS$ be a {\em feature map} such that $k(x,x^\prime) = \scalarp{\fH(x), \fH(x^\prime)}$ for all $x, x^\prime \in \spX$. We assume that $\RKHS \subset \Lii$, enabling us to approximate $\anfun(\generator):\Wii \to \Lii$ with an operator $\Estim:\RKHS \to \RKHS$. Although $\RKHS$ is a subset of $\Lii$, they have different metric structures, so for $f, g \in \RKHS$, $\scalarp{f, g}_{\RKHS} \neq \scalarp{f,g}_{\Liishort}$. To resolve this, we introduce the \emph{injection operator} $\TS:\RKHS \to \Lii$, which maps each $f \in \RKHS$ to its pointwise equivalent in $\Lii$ with the appropriate $\Liishort$ norm and note that a direct calculation shows that its adjoint $\TS^*\colon\Lii\to\RKHS$ acts as the Bochner integral $\TS^*f = \EE_{X\sim\im}[f(X)\phi(X)]\in\RKHS$, $f\in\Lii$.

In machine learning, two typical choices for the space $\RKHS$ and the associated kernel $k$ arise. The first one, fundamental in learning theory, are  \textit{universal kernels}, for which $\RKHS$ is dense in $\Lii$. A common example is the Gaussian kernel, $k(x,x^\prime) = \exp\{\frac{\|x-x'\|^2}{2\sigma }\}$; see, e.g., \citep{Steinwart2008} for more examples. Moreover, if the kernel is bounded the injection operator $\TS$ is Hilbert-Schmidt, allowing one to efficiently learn bounded operators on $\Liishort$ via finite rank approximations \citep{Kostic2022}. The second choice are finite-dimensional RKHS, obtained by \textit{dictionaries of functions} $(z_i)_{i=1}^m\subseteq\Lii$, corresponding to the kernel  $k(x,x')=z(x)^\top z(x')$ and $\phi(x)= z(x)^\top z(\cdot)$, where  $z(x)=[z_1(x)\vert\,\cdots\vert z_m(x)]^\top$. In this case, one  readily sees that $\RKHS$ is isometric to $\mathbb{R}^m$ and operators on $\RKHS$ to $m \times m$ matrices. 
%\VK{[TBW, norms, isometric-isomorphism with matrices, covariance matrices, can be invertible, to properly learn dictionary needs to span the L2 space when m grows] }.

With this setting in mind, in order to introduce the learning problem we restrict to the class of functions $\anfun$ that are bounded on the spectrum of $L$, allowing us to formulate learning in $\Liio$ as a linear inverse problem
\begin{equation}\label{eq:inv_prob}
\text{Find } \quad\Estim\colon\RKHS\to\RKHS\quad \text{ s.t. }\quad \anfun(\generator)\TJ\TS=\TJ\TS\Estim,
\end{equation}
whose solution is $\HKoop=(\TS^*\TJ\TS)^\dagger \TS^*\TJ\anfun(\generator)\TJ\TS$. Thus, recalling the definition of Toeplitz symbol, the injection operator and its adjoint, the following characterization easily follows. 
\begin{proposition}\label{prop:inv_prob_solution}
Given $\dt>0$, let $\Cxy{j}\colon\RKHS\to\RKHS$ be the $j\dt$-time-lagged cross-covariance operator
\begin{equation}\label{eq:cross_cov}
\Cxy{j}=\EE_{X_0\sim\im}\,[\phi(X_0)-\EE_{X\sim\im}\phi(X)]\otimes[\phi(X_{j\dt})- \EE_{X\sim\im}\phi(X)],\; j\in\N_0.   
\end{equation}
Then the solution of the inverse problem \eqref{eq:inv_prob} is given by $\HKoop = \Cxy{0}^\dagger \Yx$ , where
\begin{equation}\label{eq:inv_prob_solution}
\Yx = \textstyle{\weight_0\Cxy{0}+\sum_{j\in\N}[\,\weight_j\Cxy{j}+\weight_{-j}\Cxy{j}^*\,],   }
\end{equation}
represents the $\weight$-weighted time-lagged cross-covariance operator. 
\end{proposition}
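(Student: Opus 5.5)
The proof starts from the stated solution $\HKoop=(\TS^*\TJ\TS)^\dagger \TS^*\TJ\anfun(\generator)\TJ\TS$ and identifies its two factors. The first factor is the centered covariance $\TS^*\TJ\TS=\Cxy{0}$. The second is the weighted sum $\TS^*\TJ\anfun(\generator)\TJ\TS=\Yx$. For the identification, I use $\anfun(\generator)=\toepsymb(A_{\dt})=\weight_0 I+\sum_{j\in\N}(\weight_j A_{\dt}^j+\weight_{-j}(A_{\dt}^*)^j)$. This is the extension of the symbol \eqref{eq:toeplitz_symbol} to the disc, with $\bar z$ replaced by the adjoint. By the semigroup property, $A_{\dt}^j=A_{j\dt}$. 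The series converges in operator norm if the coefficients are summable, since $\norm{A_t}\le 1$ (contraction semigroup). Otherwise I restrict to $\anfun$ bounded on the spectrum, as the paper assumes. Linearity and continuity of $h\mapsto \TS^*\TJ h\TJ\TS$ then reduce the claim to the single terms $\TS^*\TJ A_{j\dt}\TJ\TS=\Cxy{j}$, $j\in\N_0$. Taking adjoints gives $\TS^*\TJ A_{j\dt}^*\TJ\TS=\Cxy{j}^*$.

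For the single term I compute the bilinear form on $f,g\in\RKHS$. By the reproducing property, $\TS f=\scalarp{f,\phi(\cdot)}$. Then $\TJ\TS f=\scalarp{f,\phi(\cdot)-\EE_\im\phi}$, with the mean $\EE_\im\phi=\TS^*\one_\im$. Using \eqref{Eq: transfer operator def} and stationarity $X_0\sim\im$,
\[
\scalarp{\TJ\TS f, A_{j\dt}\TJ\TS g}_{\Liishort}=\EE\big[\scalarp{f,\phi(X_0)-\EE_\im\phi}\,\scalarp{g,\phi(X_{j\dt})-\EE_\im\phi}\big],
\]
by the tower property $\EE[h(X_{j\dt})\mid X_0]=(A_{j\dt}h)(X_0)$. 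Here I used that $\TJ$ is a self-adjoint projector and that $A_{j\dt}$ commutes with $\TJ$. The commutation holds because $A_t\one_\im=\one_\im$ and $A_t^*\one_\im=\one_\im$ by invariance. The right-hand side is $\scalarp{f,\Cxy{j}g}_\RKHS$ for the rank-one convention $(u\otimes v)g=\scalarp{v,g}u$ in \eqref{eq:cross_cov}. Bochner integrability follows from a bounded kernel, or finite second moments of $\phi$. The case $j=0$ gives $\Cxy{0}=\TS^*\TJ\TS$.

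Assembling the terms gives $\HKoop=\Cxy{0}^\dagger\Yx$ with $\Yx$ as in \eqref{eq:inv_prob_solution}. The main obstacle is not the algebra but making the expansion of $\toepsymb(A_{\dt})$ rigorous. For non-normal $A_{\dt}$, writing $\bar z\mapsto A_{\dt}^*$ has to be the definition of the calculus rather than a consequence of a spectral theorem. I would therefore state this as the definition of $\anfun(\generator)$. I would also assume absolute summability of $(\weight_j)$ so that the interchange of the sum with $\TS^*\TJ(\cdot)\TJ\TS$ and with the expectation is justified by dominated convergence.
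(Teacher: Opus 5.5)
Your proof is correct and matches the paper's argument. The paper's one-line proof applies $\TS^*$ to \eqref{eq:inv_prob} to get the normal equation $\Cxy{0}\Estim=\TS^*\TJ\anfun(\generator)\TJ\TS$ with $\Cxy{0}=\TS^*\TJ\TS$, and leaves implicit the identification of the right-hand side with $\Yx$, which you supply through $\TS^*\TJ A_{j\dt}\TJ\TS=\Cxy{j}$ and the convention $\bar z\mapsto A_{\dt}^*$. Your remark that $A_{j\dt}$ commutes with $\TJ$ is superfluous in your version, since both projectors are already present. It is, however, exactly what lets the paper's literal step (multiplying by $\TS^*$ alone rather than by $\TS^*\TJ$) produce the doubly centered operator $\Yx$.
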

\begin{proof}
Multiply equation \eqref{eq:inv_prob} by $\TS^*$ from the right and note that $C_0= \TS^* \TJ\TS$.
\end{proof}

Finally, recalling \eqref{eq:main_idea}, we observe that when estimating  \eqref{eq:inv_prob_solution} from trajectory data we hit a limit whenever Toeplitz coefficients $a_{j}$ are nonzero for $|j|$ larger than the trajectory length $n$. In such cases, the best we can do is to approximate $\anfun(L)=\toepsymb(A_{\dt})$ by a $\ell$-truncation $\toepsymb_\ell$ of $\toepsymb$, that is by $\anfun_\ell(\generator):=\toepsymb_\ell(A_{\dt})$. Then, the additional operator approximation error can be controlled  by the the error on the spectrum for normal operators, while in general we can use the fundamental spectral-set property of the numerical rage
\begin{theorem}[Crouzeix's Theorem, see \cite{crouzeix2017numerical}] 
\label{thm:crouzeix}
Let $A$ be a bounded linear operator on a Hilbert space $\mathbb{H}$, and let $f$ be a function analytic on $\fov(A)$. Luckily \[ \| f(A) \| \leq (1+\sqrt{2}) \max_{z \in \cl\!\fov(A)} |f(z)|, \] where $\cl\!\fov(A)$ denotes the closure of numerical range of $A$. 
\end{theorem}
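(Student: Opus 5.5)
The statement is Crouzeix's theorem, quoted from \cite{crouzeix2017numerical}. I would follow the Crouzeix--Palencia argument rather than reprove it from scratch.

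\textbf{Reductions.} Write $\Omega=\fov(A)$, which is convex and bounded. If $\Omega$ has empty interior, it is a segment, $A$ is an affine image of a self-adjoint operator, and the bound holds with constant $1$. Otherwise I would use three reductions. First, restrict to finite dimensions by compressing to finite-dimensional subspaces: $P f(A) P$ is approximated by $f(P A P)$, and $\fov(PAP)\subseteq\fov(A)$. Second, replace $f$ by $f(\,\cdot\,(1-\varepsilon)+\varepsilon z_0)$ so that it is analytic on a neighbourhood of $\cl\fov(A)$. Third, shift the spectrum into the interior of $\Omega$ by replacing $\Omega$ with a slightly larger smooth convex domain $\Omega_\delta$ and letting $\delta\to0$. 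After these steps, $f$ is analytic across the boundary of a smooth convex $\Omega$ and $\sigma(A)\subset\Omega$.

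\textbf{Core identity.} By the Cauchy integral, $f(A)=\frac{1}{2\pi i}\int_{\partial\Omega} f(\sigma)(\sigma-A)^{-1}\,d\sigma$. Introduce the Cauchy transform of $\bar f$, namely $g(z)=\frac{1}{2\pi i}\int_{\partial\Omega}\overline{f(\sigma)}(\sigma-z)^{-1}\,d\sigma$, and set $g(A)$ by the same functional calculus. Let $\mu(\sigma,z)$ denote the double-layer potential kernel. One checks that
\[
f(A)+g(A)^*=\int_{\partial\Omega} f(\sigma)\,\mu(\sigma,A)\,ds(\sigma),
\]
where $\mu(\sigma,A)=\frac{1}{2\pi}\bigl(\nu(\sigma)(\sigma-A)^{-1}+\overline{\nu(\sigma)}(\bar\sigma-A^*)^{-1}\bigr)$ and $\nu(\sigma)$ is the unit tangent.

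The key geometric fact is that $\mu(\sigma,A)$ is positive semidefinite for $\sigma\in\partial\Omega$. This holds because $\Omega$ is convex and contains $\fov(A)$: the operator $\Re\bigl(\overline{n(\sigma)}(\sigma-A)\bigr)\ge0$, with $n$ the outward normal, and inverting preserves this sign. Its integral equals the identity, since $\int_{\partial\Omega}\mu(\sigma,A)\,ds=f(A)+g(A)^*$ for $f\equiv1$, which gives $I+0^*$ up to normalization. A positive operator-valued measure of total mass $I$ then gives $\|f(A)+g(A)^*\|\le \max_{\partial\Omega}|f|=:M$.

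\textbf{Closing the bound.} Take $f$ with $\|f(A)\|$ maximal among normalized functions, which is possible by compactness in finite dimensions. Crouzeix--Palencia show $\|g\|_{\infty,\Omega}\le M$, again from positivity of the double-layer kernel in convex domains. They then use an extremal-vector argument to obtain $\|f(A)\|^2\le M^2+2M\|f(A)\|$, whose root yields $\|f(A)\|\le(1+\sqrt2)M$.

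\textbf{Main obstacle.} The hardest step is this last quadratic inequality. Let $x$ be a unit vector with $\|f(A)x\|=\|f(A)\|$. The natural attempt is to apply the previous bound to $h=f\cdot\bar{\ }$-type products built from $f$, exploiting $\langle f(A)^*f(A)x,x\rangle$ together with $g$ to bound $\|f(A)\|^2$. Getting the constant exactly $1+\sqrt2$ requires exactly their trick; if it failed, I would settle for a weaker constant such as $2+\|g\|$.

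For the paper's purposes, however, the theorem is simply cited as known, and no new argument is needed.
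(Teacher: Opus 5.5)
The paper gives no proof of this statement. It quotes Crouzeix--Palencia and uses the result as a black box, so your closing remark matches the paper's treatment exactly. Read as a proof, however, your sketch contains one error in the core bound and omits the one step that produces the constant.

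\textbf{The error: the total mass is $2I$, not $I$.} For $f\equiv1$, the Cauchy transform of $\bar f=1$ is $g\equiv1$ on $\Omega$, not $0$. Hence $f(A)+g(A)^*=2I$, and positivity of the kernel gives only $\|f(A)+g(A)^*\|\le 2M$. The bound $\le M$ you state is false. Take $A=\begin{pmatrix}0&2\\0&0\end{pmatrix}$, whose numerical range is the closed unit disc, let $\Omega$ be a disc of radius $R\in(1,2)$, and let $f(z)=z$. Then $g\equiv0$ and $\|f(A)+g(A)^*\|=2>R=M$. Your later inequality $\|f(A)\|^2\le M^2+2M\|f(A)\|$ is only consistent with the factor $2$.

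Two smaller corrections:
\begin{itemize}
\item With the prefactor $\frac{1}{2\pi}$ and the plus sign, $\nu$ must be the outward unit normal, not the tangent.
\item The bound $\|g\|_\infty\le\|f\|_\infty$ does not follow from positivity of the interior kernel, which only gives $|g|\le 3M$. It comes from the boundary version: for $\sigma_0\in\partial\Omega$, $g(\sigma_0)=\frac1\pi\int\overline{f(\sigma)}\,d_\sigma\arg(\sigma-\sigma_0)$ is an average against a probability measure by convexity, and the maximum principle then extends the bound to $\Omega$.
\end{itemize}

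\textbf{The missing step: the extremal-vector trick.} You leave this open explicitly. Normalize $M=1$, let $K$ be the best constant, let $f$ be extremal with $\|f(A)\|=K$, and let $x$ be a unit vector with $f(A)^*f(A)x=K^2x$. Then
\[
K^2=\langle (f(A)+g(A)^*)x,\,f(A)x\rangle-\overline{\langle (gf)(A)x,x\rangle},
\]
and the first term is at most $2K$. The naive estimate $|\langle (gf)(A)x,x\rangle|\le K\|gf\|_\infty\le K$ only gives $K\le 3$. Instead, substitute $x=K^{-2}f(A)^*f(A)x$ in the second slot:
\[
|\langle (gf)(A)x,x\rangle|=K^{-2}\,|\langle (f^2g)(A)x,\,f(A)x\rangle|\le K^{-2}\cdot K\|f^2g\|_\infty\cdot K\le 1,
\]
using $\|g\|_\infty\le\|f\|_\infty$. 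Hence $K^2\le 2K+1$, that is, $K\le 1+\sqrt2$.

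If you want to avoid proving that an extremal $f$ exists, run the same computation with $\|f(A)\|=c$ close to $K$. This gives $c^2\le 2c+K/c$, and you let $c\to K$.

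Your proposed fallback constant ``$2+\|g\|$'' is not valid. Bounding $\|g(A)\|$ by $\|g\|_\infty$ is exactly what is being proved, so it is circular. Without the trick, the honest fallback is the constant $3$.
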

More precisely, assuming that $\anfun$ and $\toepsymb_\ell$ are analytic on $\fov(L)$, we can focus on learning $\toepsymb_\ell$ knowing that the approximation error $\norm{\anfun(\generator)-\anfun_\ell(\generator)}_{\Lii\to\Lii}$ is bounded by 
\begin{equation}\label{eq:app_error_fun}
{\rm err}_\ell(\anfun):=
\begin{cases}
\sup_{\omega\in \Spec(\generator)}|\anfun(\omega)-\anfun_\ell(\omega)|, & \generator\generator^*=\generator^*\generator, \\
(1{+}\sqrt{2})\,\sup_{\omega\in \fov(\generator)}|\anfun(\omega)-\anfun_\ell(\omega)|, & \text{ otherwise.}   
\end{cases} 
\end{equation}

We conclude this section by making the link between the inverse problem \eqref{eq:inv_prob} and the risk-based operator learning for three particular cases, (i) $\anfun$ with polynomial expansions for general $\generator$, (ii) general $\anfun$ for self-adjoint $\generator$ (time-reversal-invariant dynamical systems) and (iii) general $\anfun$ for skew-adjoint $\generator$ (deterministic dynamical systems). To that end, we observe that $\LKoop_{t}^*f = \LKoop_t f = \EE[f(X_t)\,\vert\,X_0=\cdot]$ holds for self-adjoint generators, while $\LKoop_{t}^*f= \EE[f(X_{-t})\,\vert\,X_0=\cdot]$ is true for skew-adjoint ones. Therefore, in all three cases above the risk can be reformulated as the mean square error (MSE) of predicting the target feature  
\begin{equation}\label{eq:laplace_embedding}
\IfH(X_0)=
\begin{cases}
%\displaystyle
{\sum_{j\in\N_0}}\,a_j\fH(X_{j\dt}), & a_j = 0 \text{ for all } j<0,\\
%\displaystyle
{\sum_{j\in\mathbb{Z}}}\,a_j\fH(X_{|j|\dt}), & \generator^*=\generator,\\
%\displaystyle
{\sum_{j\in\mathbb{Z}}}\,a_j\fH(X_{j\dt}), & \generator^*=-\generator,
\end{cases}
\end{equation} 
by $\Estim^*\phi(X_0)$ in the stationary distribution $\im$, that is  
\begin{equation}\label{eq:true_risk}
%\min_{\Estim\colon\RKHS\to\RKHS}
\Risk(\Estim)= 
\EE_{X_0\sim \im}\norm{\IfH(X_0) -\Estim^*\fH(X_0)}_{\RKHS}^2. 
\end{equation}
So, as essentially shown in \citep{Kostic2022}, we have that the inverse problem \eqref{eq:inv_prob} solution is the minimizer of the excess risk $\ExRisk(\Estim)\!=\!\Risk(\Estim) \!-\! \min_{\EEstim}\Risk(\EEstim) \!=\! \norm{\anfun(\generator)\TJ\TS\!-\!\TJ\TS\EEstim}^2_{\HS{\RKHS,\Liishort}}$. Furthermore, if $\RKHS$ is dense in $\Lii$ and the injection operator is Hilbert-Schmidt, then one can find arbitrarily good finite-rank approximations of {$\anfun(\generator)\TJ\TS$}. Otherwise, if we learn in non-universal RKHS, there is a representation error $\norm{(I-P_{\RKHS})\anfun(\generator)}$, $P_\RKHS$ being the orthogonal projection on the $\RKHS$ in $\Liio$, which affects the estimation, see e.g. \citep{Kostic_2023_learning,Kostic2024forecasting}. 

%As reported in \citep{kostic2023sharp} and \citep{Kostic2024diffusion}, a significant challenge in spectral decomposition estimation arises. Specifically, as the estimator's rank increases, the metric distorsion between $\RKHS$ and $\Lii$ negatively impact learning. 
%The main bottleneck of the risk functional \eqref{eq:true_risk} is clearly the problem of computing the integral in \eqref{eq:laplace_embedding}, which prevents the use of standard operator regression approach.

%The bottleneck in the risk functional \eqref{eq:true_risk} is computing the integral in \eqref{eq:laplace_embedding}, which hinders the use of standard operator regression methods. %Although \citep{Kostic2024diffusion} proposes an alternative for learning eigenfunctions of the generator through an energy-based risk functional, this approach is limited to diffusions with a Dirichlet form and self-adjoint generators. 
%We present in the next section a novel general approach that clears this difficulty.% leveraging \eqref{eq:resolvent}.

%While in \citep{Kostic2024diffusion} an alternative to learning the resolvent is proposed by adapting the norm when $\generator=\generator^*$ has a known, or estimated, Dirichlet form, the problem remains unsolved for more general processes. 

%In the following sections, we will address this problem by generalizing the transfer operator analysis from \citep{kostic2023sharp} to the semigroup case using the IG's resolvent, with key results summarized next.

%\karim{\cite{hou23c} covers trajectories beta-mixing condition}

\section{Toeplitz based empirical estimators}\label{sec:methods} % (1.5pg)}

In this section we assume to have access to a dataset $\Data=(x_{i})_{i=1}^{n}$ obtained by sampling the process $(X_t)_{t\geq0}$ at some sampling frequency $1/\dt$ for $\dt>0$ being typically small in order to observe all the relevant time-scales and oscillatory frequencies of the process. In the context of deterministic systems, this introduces the fundamental limit of Nyquist frequency $\pi/\dt$ for any empirical estimator. While given such trajectory data $\Data$ one can clearly estimate $\Cxy{j}$ given by \eqref{eq:cross_cov} by its empirical version 
% \begin{equation}\label{eq:emp_cross_cov}
% \textstyle{\ECxy{j}=\frac{1}{n-j-1}\sum_{i=1}^{n-j}\,\left[\phi(x_i)-\frac{1}{n-j}\sum_{i=1}^{n-j}\phi(x_i)\right]\otimes\left[\phi(x_{j+i})- \frac{1}{n-j}\sum_{i=1}^{n-j}\phi(x_{j+i})\right]},\;\; 0\leq j\leq n-2,   \end{equation}
% \begin{equation}\label{eq:emp_cross_cov}
% \textstyle{\ECxy{j}=\frac{1}{n-j}\sum_{i=1}^{n-j}\,\phi(x_i)\otimes\phi(x_{j+i}) - \big[\frac{1}{n}\sum_{i\in[n]}\phi(x_{i})\big]\otimes \big[\frac{1}{n}\sum_{i\in[n]}\phi(x_i)\big]},\;\; 0\leq j\leq n-1,   \end{equation}
\begin{equation}\label{eq:emp_cross_cov}
\textstyle{\ECxy{j}=\frac{1}{n-j}\sum_{i=1}^{n-j}\,\left[\phi(x_i)-\frac{1}{n}\sum_{i\in[n]}\phi(x_i)\right]\otimes\left[\phi(x_{j+i})- \frac{1}{n}\sum_{i=\in[n]}\phi(x_{j+i})\right]},\;\; 0\leq j\leq n-1,   \end{equation}
and, hence, approximate $\HKoop$, the inverse problem \eqref{eq:inv_prob} is typically ill-posed and to learn well, we need to regularize. Here we focus on the Tikhonov and rank regularization, known to be statistically optimal for transfer operator learning in the form of \textit{Reduced Rank Regression} (RRR), \citep{kostic2023sharp}. More precisely, we minimize the regularized excess risk w.r.t. rank at most $r$ operators in $\RKHS$, i.e.
\begin{equation}\label{eq:reg_ex_risk}
\min_{{\rm rank}(\Estim)\leq r} \norm{\anfun(\generator)\TJ\TS\!-\!\TJ\TS\EEstim}^2_{\HS{\RKHS,\Liishort}} + \reg \norm{\Estim}_{\HS{\RKHS,\RKHS}}^2
\end{equation}
which, due to Eckart-Young-Mirsky theorem, c.f. \citep{Kostic2022}, can be expressed in a closed form
\begin{equation}\label{eq:population_rrr}
\RRR= (\Cxy{0}+\reg I)^{-1/2}\SVDr{(\Cxy{0}+\reg I)^{-1/2}\Yx},   
\end{equation}
where $\SVDr{\cdot}$ denoted the $r$-truncated singular value decomposition. Now, by replacing the population time-lagged cross-covariances with their empirical counterparts we obtain the RRR estimator of $\anfun(\generator)$
\begin{equation}\label{eq:empirical_rrr}
\ERRR= (\ECxy{0}+\reg I)^{-1/2}\SVDr{(\ECxy{0}+\reg I)^{-1/2}\EYx},\;\text{ where }\; \EYx = \textstyle{\weight_0\ECxy{0}+\sum_{j\in[\ell]}}[\,\weight_j\ECxy{j}+\weight_{-j}\ECxy{j}^*\,], 
\end{equation}
and $\ell<n-2$ is chosen small enough so that all empirical time-lagged cross-covariances with non-zero weighs are well-defined.

In the reminder of this section we show how to compute the estimator \eqref{eq:empirical_rrr} and its eigenvalue decomposition in two typical settings: (i) \textit{primal} when $m=\dim(\RKHS)\leq n$ and (ii) \textit{dual} when $n<m=\dim(\RKHS)\leq \infty$, which are summarized in Algorithms \ref{alg:primal} and \ref{alg:dual}, respectively. 

To that end, we introduce the sampling operator $\ES\colon\RKHS\!\to\!\R^{n}$ and its adjoint $\ES^*\colon\R^{n}\!\to\!\RKHS$
\begin{equation}\label{eq:sampling_op}
    \ES h\!=\!\tfrac{1}{\sqrt{n}} (h(x_{i\!-\!1}))_{i\in[n]}\text{ and } \ES^*v \!=\! \tfrac{1}{\sqrt{n}}\textstyle{\sum_{i\in[n]}}v_i\fH(x_{i\!-\!1}),
\end{equation}
which, after some basic computations, allow one to elegantly express the empirical time-lagged cross-covariances as $\ECxy{j}{=}\frac{n}{n-j}\ES^*\EJ(\sum_{i\in[n]} e_{i}e_{i+j}^\top)\EJ\ES$, where $(e_i)_{i\in[n]}{\subset}\R^n$ is the standard basis and $\EJ\in\R^{n\times n}$ is the orthogonal projector onto the complement of $e=\sum_{i} e_i$. Therefore, recalling \eqref{eq:emp_cross_cov}, we can define the Toeplitz matrix $\toeplitz_n\in\R^{n\times n}$ associated to scaled coefficients $\weight_j'=\tfrac{n}{n-j}\weight_j$, for $|j|\leq\ell$, that is 
\begin{equation}\label{eq:topelitz}
(\toeplitz_n)_{i,i+j} := \begin{cases}
(n\weight_j)/(n{-}|j|)  &, i\in[n], -\ell\leq j \leq \ell, \\
0 &, \text{ otherwise},
\end{cases}
\end{equation}
to express $\EYx = \ES^*\EJ \toeplitz_n\EJ\ES$ and prove following two theorems on eigenvalue decomposition of \eqref{eq:empirical_rrr} in both settings.

\begin{theorem}\label{thm:primal}
Let $\dt>0$, and let $\Data=(x_i)_{i=1}^n$ be a equally $\dt$-time spaced trajectory of a dynamical system generated by $\generator$ at a stationary distribution $\im$. Assuming that the RKHS $\RKHS$ is generated by the kernel $k(x,x')=z(x)^\top z(x')$, let $\Z = [z(x_1)\,\vert\,\cdots\,\vert\,z(x_n)]$ be the data matrix in the representation space. If by $\textsc{C}_\reg=\tfrac{1}{n}\Z\EJ\Z^\top+\reg I\in\R^{m\times m}$ and $\W = \tfrac{1}{n}\Z\EJ \toeplitz_n \EJ\Z^\top\in\R^{m\times m}$ we denote the regularized covariance and weighted time-lagged cross-covariance matrices, respectively, with $\toeplitz_n$ given in \eqref{eq:topelitz}, and if $\V_r=[v_1\,\vert\,\cdots\,\vert\,v_r]\in\R^{m\times r}$ consists of the eigenvectors corresponding to the largest eigenvalues $\sigma_i^2$ satisfying the generalized positive definite eigenvalue problem
\begin{equation}\label{eq:primal_gep}
\W \W^\top v_i = \sigma_i^2 \textsc{C}_\reg v_i,\;\;\text{ normalized s.t. } v_i^\top\textsc{C}_\reg v_i = 1,\;\; i\in[r],   
\end{equation}
then, assuming that the matrix $\V_r^\top \W \V_r \in\R^{r\times r}$ is non-defective, the spectral decomposition $(\widehat{\nu}_i,\elefun_i,\erefun_i)_{i\in[r]}$ of the RRR estimator \eqref{eq:empirical_rrr} is given by the eigenvalue decomposition $(\widehat{\nu}_i,\levec_i,\revec_i)_{i\in[r]}$ of the matrix $\V_r^\top \W \V_r \in\R^{r\times r}$, by $\elefun = z(\cdot)^\top\W\V_r\levec_i$ and $\erefun=z(\cdot)^\top \V_r\revec_i$. Moreover, if $\anfun(\generator)$ is self-adjoint, the eigenvalues $\nu_i$ are real, while if it is skew-adjoint, they are purely imaginary. 
\end{theorem}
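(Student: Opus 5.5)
In the finite-dimensional RKHS $\RKHS$ induced by the dictionary $z$, I will identify $h=z(\cdot)^\top u\in\RKHS$ with its coefficient vector. The identification has to respect the inner product, so for clarity I will assume an orthonormal dictionary, meaning $\RKHS\cong\R^m$ isometrically. Then $\ES h=\tfrac{1}{\sqrt n}\Z^\top u$ and $\ES^*v=\tfrac{1}{\sqrt n}\Z v$. The identities established before the statement give $\ECxy{0}+\reg I\mapsto\textsc{C}_\reg$ and $\EYx=\ES^*\EJ\toeplitz_n\EJ\ES\mapsto\W$, and the RRR estimator \eqref{eq:empirical_rrr} becomes the matrix
\[
\widehat{G}=\textsc{C}_\reg^{-1/2}\SVDr{\textsc{C}_\reg^{-1/2}\W}.
\]
Everything else is linear algebra on $m\times m$ matrices.

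\textbf{Step 1: the truncated SVD via the generalized eigenproblem.} Let $B=\textsc{C}_\reg^{-1/2}\W$. Then $\SVDr{B}=P_rB$, where $P_r$ is the projector onto the top-$r$ left singular vectors $q_i$ of $B$. These are the top eigenvectors of $BB^\top=\textsc{C}_\reg^{-1/2}\W\W^\top\textsc{C}_\reg^{-1/2}$. The substitution $v_i=\textsc{C}_\reg^{-1/2}q_i$ turns $BB^\top q_i=\sigma_i^2q_i$ into \eqref{eq:primal_gep}, with the normalization $v_i^\top\textsc{C}_\reg v_i=q_i^\top q_i=1$. Hence $P_r=\textsc{C}_\reg^{1/2}\V_r\V_r^\top\textsc{C}_\reg^{1/2}$, and therefore
\[
\widehat{G}=\textsc{C}_\reg^{-1/2}P_r\textsc{C}_\reg^{-1/2}\W=\V_r\V_r^\top\W .
\]

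\textbf{Step 2: reduce the eigenproblem to $r\times r$.} The nonzero eigenpairs of $\V_r(\V_r^\top\W)$ coincide with those of $(\V_r^\top\W)\V_r=\V_r^\top\W\V_r$. If $\V_r^\top\W\V_r\revec_i=\widehat\nu_i\revec_i$, then $u=\V_r\revec_i$ satisfies $\widehat{G}u=\widehat\nu_iu$. This gives the right eigenfunction $\erefun_i=z(\cdot)^\top\V_r\revec_i$.

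For left eigenvectors, take $(\levec_i)^*\V_r^\top\W\V_r=\widehat\nu_i(\levec_i)^*$. Then $w=\W^\top\V_r\levec_i$ (with conjugation where needed) satisfies $\widehat{G}^\top w=\W^\top\V_r(\V_r^\top\W\V_r)^\top\levec_i=\widehat\nu_i w$. This yields $\elefun_i$ of the stated form, up to the transpose convention.

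Non-defectiveness ensures that the $\revec_i,\levec_i$ form biorthogonal bases, so $\widehat{G}$ restricted to its range has the spectral decomposition $\sum_i\widehat\nu_i\,\erefun_i\otimes\elefun_i$. Here I use that the range of $\widehat G$ lies in $\mathrm{span}(\V_r)$. The remaining eigenvalues of $\widehat{G}$ are zero and are discarded.

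\textbf{Step 3: the structural claims.} If $\anfun(\generator)$ is self-adjoint, the symbol coefficients satisfy $\weight_{-j}=\bar\weight_j$, and real-valued data give real weights. Then $\toeplitz_n$ is symmetric, because $n/(n-|j|)$ is symmetric in $j$. Hence $\W$ is symmetric, and so is $\V_r^\top\W\V_r$, whose eigenvalues are therefore real. In the skew-adjoint case $\weight_{-j}=-\weight_j$ and $\weight_0=0$, so $\toeplitz_n$, $\W$ and $\V_r^\top\W\V_r$ are skew-symmetric and their spectra are purely imaginary.

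\textbf{Main obstacle.} The delicate point is Step 1. I must verify that $P_r$ truly equals $\textsc{C}_\reg^{1/2}\V_r\V_r^\top\textsc{C}_\reg^{1/2}$, which requires $\textsc{C}_\reg$-orthonormality of the $v_i$ and is automatic for a symmetric-definite pencil. Ties at $\sigma_r$ also need care. A secondary issue is keeping the adjoint and transpose conventions consistent for the left eigenfunctions.
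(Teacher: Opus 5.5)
Your proposal is correct and follows the paper's proof. You identify $\ERRR$ with $\textsc{C}_\reg^{-1/2}\SVDr{\textsc{C}_\reg^{-1/2}\W}$; your orthonormality assumption costs nothing, since linearly independent $z_i$ are automatically orthonormal in the RKHS of $k(x,x')=z(x)^\top z(x')$. You then rewrite the estimator as $\V_r\V_r^\top\W$ via $v_i=\textsc{C}_\reg^{-1/2}q_i$, correctly using $\textsc{C}_\reg^{-1/2}\W$ where the paper's proof misprints $\textsc{C}_\reg^{1/2}\W$. Finally you pass to $\V_r^\top\W\V_r$, which is the low-rank eigenvalue characterization the paper invokes.

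Two remarks. First, your left eigenvector $\W^\top\V_r\levec_i$ is the right one; it agrees (up to sign) with the stated $\W\V_r\levec_i$ only when $\W$ is symmetric or skew-symmetric, so this is not merely a transpose convention. Second, in Step 3 the symmetry $\weight_{-j}=\weight_j$ does not follow from self-adjointness of $\anfun(\generator)$: with $\toepsymb(z)=z$ and self-adjoint $\generator$, $\anfun(\generator)=A_{\dt}$ is self-adjoint, yet $\V_r^\top\W\V_r$ is generally non-symmetric. The final claim must therefore be read as a hypothesis on the chosen (skew-)symmetric symbol, as in the summary table. Under that reading your argument is correct and supplies a step the paper leaves unproved.
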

\begin{proof}
Observing that the operators \eqref{eq:emp_cross_cov} on $\RKHS=\Span(z_i)_{i\in[m]}$ are isometrically isomorphic to a matrices $\textsc{C}_j$ computed by replacing $\fH$ by $z$, we conclude that estimator $\ERRR$ is isometrically isomorphic to $m\times m$ matrix $\textsc{C}_\reg^{-1/2}\SVDr{\textsc{C}_\reg^{-1/2}\W}$ in basis $(z_i)_{i\in[N]}$. But then, for $v_i$'s given by \eqref{eq:primal_gep} we have that $\textsc{C}_\reg^{1/2}v_i$ are the leading left singular vectors of $\textsc{C}_\reg^{1/2}\W$, and therefore 
\begin{equation*}
\textsc{C}_\reg^{-1/2}\SVDr{\textsc{C}_\reg^{-1/2}\W}=\textsc{C}_\reg^{-1/2}\big[\textsc{C}_\reg^{1/2}\V_r \V_r^\top \textsc{C}_\reg^{1/2}\big]\textsc{C}_\reg^{-1/2}\W=\V_r\V_r^\top\W.  
\end{equation*}
To conclude the proof it suffices to apply the characterization of the low-rank eigenvalue problems, \citep{SS1990}.
\end{proof}

\begin{algorithm}[h!]
\caption{Primal Toeplitz RRR} \label{alg:primal}
\begin{algorithmic}[1]
\REQUIRE 
dictionary of functions $(z_i)_{i\in[m]}$, Toeplitz coefficients $(\weight_i)_{i=-\ell}^{\ell}$ and hyperparameters $\reg>0$ and $r\in[n]$.
\STATE Compute $\Z = [z(x_1)\,\vert\,\ldots\,\vert z(x_{n})]\in\R^{m\times n}$
\STATE Remove the column-wise mean, i.e. $\Z\leftarrow \Z\EJ$
\STATE Apply Toeplitz matrix \eqref{eq:topelitz} to $\Z$, i.e. $\Z_\anfun\leftarrow \Z \toeplitz_n$ 
\STATE Compute $\W{=}\tfrac{1}{n}\Z_\anfun\Z^\top$ and $\textsc{C}_\reg{=}\tfrac{1}{n}\textsc{Z}\textsc{Z}^\top\!\!{+}\reg I$ 
\STATE Solve eigenvalue problem $\textsc{H}\textsc{H}^\top v_i{=}\sigma_i^2 \textsc{C}_\reg v_i$, $i\in[r]$ 
\STATE Normalize $v_i\leftarrow v_i / (v_i^\top \textsc{C}_\reg v_i)^{1/2}$, $i\in[r]$
\STATE Form $\V_r = [v_1\,\vert\,\ldots\,\vert\,v_r]\in\R^{N\times r}$
\STATE Compute eigentriplets $(\widehat{\nu}_i,w_i^l,w_i^r)$ of $\textsc{V}_r^\top \W \textsc{V}_r$
\STATE Construct $\elefun_i {=} z^\top \textsc{W}\textsc{V}_r w_i^l$ and $\erefun_i {=} z^\top \textsc{V}_r w_i^r$
\ENSURE Estimated eigentriples $(\widehat{\nu}_i,\elefun_i,\erefun_i)_{i\in[r]}$ of $\anfun(\generator)$ 
\end{algorithmic}
\end{algorithm}

Analyzing Algorithm \ref{alg:primal}, and recalling that $r\ll m\leq n$, we note that once the data is embedded using the dictionary representation, the main computational cost lies in lines 3-5. First, concerning line 3, since applying the Toeplitz matrix can be done efficiently, either via sparse matrix multiplication if $\ell\leq\log n$, resulting in $\bigO(m\,n\,\ell)$ floating point operations, or, otherwise, via fast Fourier transform (FFT), the worst-case computational complexity is of order $\bigO(m\,n\,\log n)$. On the other hand, computation of line 4 is $\bigO(m^2 n)$ is dominating the one of line 5 $\bigO(m^3)$. Therefore, the total complexity of the primal algorithm is $\bigO(m n (m\vee (\ell\wedge \log n))$.

Next, we consider the case $\dim(\RKHS)>n$, and derive dual Alg. \ref{alg:dual} applicable also to infinite-dimensional $\RKHS$. To do so, we need to perform computations in the "sample" space. That is, we rely on the reproducing property $h(x){=}\scalarpH{h,\fH(x)}$ and kernel Gram matrix $\Kx=[k(x_i,x_j)]_{i,j\in[n]} \!\in\! \R^{n\times n}$.

\begin{theorem}\label{thm:dual}
Let $\dt>0$, and let $\Data=(x_i)_{i=1}^n$ be an equally $\dt$-time spaced trajectory of a dynamical system generated by $\generator$ at a stationary distribution $\im$. Assuming that the RKHS $\RKHS$ is generated by the kernel $k(x,x')$, let $\Kx = [k(x_i,x_j)]_{i,j\in[n]}$ be the kernel Gram matrix, while $\overline{\Kx} = \tfrac{1}{n}\EJ\Kx\EJ$ and $\overline{\Kx}_\reg=\overline{\Kx}+\reg\,I\in\R^{n\times n}$ be its centered and regularized versions, while $\toeplitz_n$ is given in \eqref{eq:topelitz}. If $\U_r=[u_1\,\vert\,\cdots\,\vert\,u_r]\in\R^{m\times r}$ consists of the eigenvectors corresponding to the largest eigenvalues $\sigma_i^2$ satisfying the generalized eigenvalue problem
\begin{equation}\label{eq:dual_gep}
\toeplitz_n \overline{\Kx} \toeplitz_n^H \overline{\Kx} u_i = \sigma_i^2 \overline{\Kx}_\reg u_i,\;\;\text{ normalized s.t. } u_i^\top\overline{\Kx}\overline{\Kx}_\reg u_i = 1,\;\; i\in[r],   
\end{equation}
and $\V_r = \overline{\Kx} \U_r$ then, assuming that $\V_r^\top \toeplitz_n \V_r \in\R^{r\times r}$ is non-defective matrix, the spectral decomposition $(\widehat{\nu}_i,\elefun_i,\erefun_i)_{i\in[r]}$ of the RRR estimator \eqref{eq:empirical_rrr} is given by the eigenvalue decomposition $(\widehat{\nu}_i,\levec_i,\revec_i)_{i\in[r]}$ of the matrix $\V_r^\top \toeplitz_n \V_r \in\R^{r\times r}$, by $\elefun = \ES^*\toeplitz^H\V_r\levec_i$ and $\erefun=\ES^* \U_r\revec_i$. Moreover, if $\anfun(\generator)$ is self-adjoint, the eigenvalues $\nu_i$ are real, while if it is skew-adjoint, they are purely imaginary. 
\end{theorem}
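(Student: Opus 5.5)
We follow the proof of Theorem~\ref{thm:primal}, working in sample space through the sampling operator $\ES$ of \eqref{eq:sampling_op}. Recall that $\ECxy{0}=\ES^*\EJ\EJ\ES$ and $\EYx=\ES^*\EJ\toeplitz_n\EJ\ES$; note that $\EJ^2=\EJ$. We also use $\ES\ES^*=\tfrac1n\Kx$ (with the indexing of $x_i$ handled consistently), which gives $\EJ\ES\ES^*\EJ=\overline{\Kx}$. The aim is to reduce the estimator \eqref{eq:empirical_rrr} to an operator of the form $\ES^*\EJ(\cdot)\EJ\ES$ with an $n\times n$ core. The low-rank eigenproblem characterization \citep{SS1990} then yields the eigentriples.

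\textbf{Step 1 (rewriting the estimator).} Let $B=(\ECxy{0}+\reg I)^{-1/2}\EYx$. The $r$-truncated SVD satisfies $\SVDr{B}=P_rB$, where $P_r$ is the orthogonal projector onto the span of the leading left singular vectors of $B$, that is, the leading eigenvectors of $BB^*$. As in the primal proof, this gives $\ERRR=(\ECxy{0}+\reg I)^{-1/2}P_r(\ECxy{0}+\reg I)^{-1/2}\EYx$. It therefore suffices to characterize the leading eigenvectors $q_i$ of $BB^*$. These are of the form $q_i=(\ECxy{0}+\reg I)^{-1/2}h_i$, where the $h_i$ solve the generalized problem $\EYx\EYx^*h=\sigma^2(\ECxy{0}+\reg I)h$ and are normalized by $\scalarpH{h_i,(\ECxy{0}+\reg I)h_i}=1$. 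The estimator is then $\sum_i h_i\otimes h_i\,\EYx$, i.e. $\mathsf{H}_r\mathsf{H}_r^*\EYx$ with $\mathsf{H}_r=[h_1\vert\cdots\vert h_r]$.

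\textbf{Step 2 (dual ansatz).} Each $h_i$ lies in the range of $\EYx\subseteq\range(\ES^*\EJ)$, so we make the ansatz $h_i=\ES^*\EJ u_i$ with $u_i\in\R^n$. Substituting, the left-hand side becomes $\EYx\EYx^*h=\ES^*\EJ\toeplitz_n\overline{\Kx}\toeplitz_n^H\overline{\Kx}u$. For the right-hand side, $\ECxy{0}h=\ES^*\EJ\overline{\Kx}u$ and $\reg h=\ES^*\EJ\reg u$, so it equals $\sigma^2\ES^*\EJ\overline{\Kx}_\reg u$. Cancelling $\ES^*\EJ$ gives exactly \eqref{eq:dual_gep}. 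Cancelling is sufficient, and on the relevant quotient it is also necessary modulo $\ker(\ES^*\EJ)$. The normalization becomes
\[
\scalarpH{h,(\ECxy{0}+\reg I)h}=u^\top\EJ\ES(\ES^*\EJ\ES+\reg)\ES^*\EJ u=u^\top\overline{\Kx}\,\overline{\Kx}_\reg u ,
\]
which matches the stated normalization.

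\textbf{Step 3 (reduced matrix and eigenfunctions).} The estimator is $\ES^*\EJ\U_r\U_r^\top\EJ\ES\ES^*\EJ\toeplitz_n\EJ\ES$. Applying \citep{SS1990} to a product $XY$ with $X=\ES^*\EJ\U_r$ and $Y=\U_r^\top\overline{\Kx}\toeplitz_n\EJ\ES$, its nonzero spectrum coincides with that of $YX=\U_r^\top\overline{\Kx}\toeplitz_n\overline{\Kx}\U_r=\V_r^\top\toeplitz_n\V_r$ for $\V_r=\overline{\Kx}\U_r$. Here we use that $\overline{\Kx}$ is symmetric and that $\overline{\Kx}\EJ=\overline{\Kx}$. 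Right eigenvectors $\revec$ of $YX$ lift to right eigenfunctions $X\revec=\ES^*\U_r\revec$, where the centering $\EJ$ is absorbed as in the statement. Left eigenvectors lift to $Y^*\levec=\ES^*\toeplitz_n^H\V_r\levec$, again up to centering. Non-defectiveness ensures a full eigenbasis.

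\textbf{Step 4 (structure of the eigenvalues).} If $\anfun(\generator)$ is self-adjoint, the symbol coefficients satisfy $\weight_{-j}=\bar\weight_j$. Then $\toeplitz_n$ is Hermitian, and so is $\V_r^\top\toeplitz_n\V_r$, whose eigenvalues are therefore real. In the skew-adjoint case $\toeplitz_n$ is skew-Hermitian and the eigenvalues are purely imaginary. The scaling $n/(n-|j|)$ is symmetric in $j$, so it preserves both properties.

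The main obstacle is the careful bookkeeping in Steps 2 and 3. One has to justify that the ansatz captures all leading eigenvectors, by quotienting out $\ker(\ES^*\EJ)$, where spurious solutions of \eqref{eq:dual_gep} could appear. One also has to track the centering projectors $\EJ$ and the indexing shift in $\ES$, so that the eigenfunction formulas come out exactly as stated.
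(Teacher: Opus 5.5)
Your proposal is correct and is essentially the paper's argument. Your vectors $h_i=\ES^*\EJ u_i$ are exactly $(\ECxy{0}+\reg I)^{-1/2}$ applied to the paper's left singular vectors $g_i=\ES^*\EJ\overline{\Kx}_\reg^{1/2}u_i$, with the same $u_i$ and the same normalization. From there you reach the same factorization $\ERRR=\ES^*\EJ\U_r\V_r^\top\toeplitz_n\EJ\ES$ and the same low-rank reduction to $\V_r^\top\toeplitz_n\V_r$, and your Step~4 also covers the ``moreover'' claim, which the paper's proof does not address.

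A few small repairs are needed:
\begin{itemize}
\item In Step~1 it should read $q_i=(\ECxy{0}+\reg I)^{1/2}h_i$, not $(\ECxy{0}+\reg I)^{-1/2}h_i$.
\item In Step~2, $h_i$ lies in $\range\big((\ECxy{0}+\reg I)^{-1}\EYx\big)$ rather than in $\range(\EYx)$. This range is still contained in $\range(\ES^*\EJ)$ by the push-through identity, so your conclusion stands.
\item In Step~4, the relation $\weight_{-j}=\pm\bar\weight_j$ must be taken as the intended hypothesis on the symbol. It does not follow from self-adjointness of $\anfun(\generator)$ alone: for example, $\toepsymb(z)=z$ with $\generator=\generator^*$ gives a self-adjoint $\anfun(\generator)$ but a non-Hermitian shift $\toeplitz_n$.
\item Also in Step~4, $\V_r^\top\toeplitz_n\V_r$ inherits the Hermitian or skew-Hermitian structure only if $\V_r$ is real, or if $\top$ is read as the conjugate transpose.
\end{itemize}
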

\begin{proof}
Recalling \eqref{eq:empirical_rrr} and the definition of the sampling operators \eqref{eq:sampling_op}, start by writing 
$(\ECxy{0}+ \reg I_{\RKHS})^{-\frac{1}{2}}\EYx$ as  $(\ES ^*\EJ\ES + \reg I_{\RKHS})^{-\frac{1}{2}}\ES\EJ ^*\toeplitz_n\EJ\ES  = \ES ^*\EJ(\EJ\ES \ES ^*\EJ+ \reg I_{\RKHS})^{-\frac{1}{2}}\toeplitz_n \EJ\ES  =  \ES ^*\EJ\overline{\Kx}_\reg^{-\frac{1}{2}}\toeplitz_n\EJ\ES $. Its leading singular values $\sigma_1\geq\ldots\geq\sigma_r$ and the corresponding {\em left} singular vectors $g_1,\ldots,g_r\in\RKHS$ are obtained by solving the eigenvalue problem
\begin{equation}\label{eq:evp_1}
 \left(\ES^* \EJ\overline{\Kx}_\reg^{-\frac{1}{2}}\ET\EJ\ES\right)\left(\ES ^*\EJ\overline{\Kx}_\reg^{-\frac{1}{2}}\ET\EJ\ES\right)^{*} g_i  = \sigma_i^2 g_i,\; i\in[r].
\end{equation}
From the above equation, clearly $g_i\in\range(\ES ^*\EJ \overline{\Kx}_\reg^{-\frac{1}{2}}) = \range(\ES ^* \EJ\overline{\Kx}_\reg^{\frac{1}{2}})$, and we can represent the singular vectors as $g_i = \ES ^*\EJ \overline{\Kx}_\reg^{\frac{1}{2}} u_i$ for some $u_i\in\R^n$, $i\in[r]$. Therefore, substituting $g_i = \ES ^* \EJ \overline{\Kx}_\reg^{\frac{1}{2}} u_i$ in~\eqref{eq:evp_1} and simplifying  we obtain
\begin{equation}\label{eq:evp_2}
\ET\overline{\Kx}\ET^H\overline{\Kx}  u_i = \sigma_i^2 \overline{\Kx}_\reg u_i,\; i\in[r].
\end{equation}
Solving~\eqref{eq:evp_2} and using that $g_i = \ES ^*\EJ \overline{\Kx}_\reg^{\frac{1}{2}} u_i$, one obtains  $(\sigma_i^2,g_i)$, $i\in[r]$, the solutions of the eigenvalue problem \eqref{eq:evp_1}. In order to have properly normalized $g_i$, it must hold for all $i\in[r]$ that
\begin{equation}\label{eq:eigenvector_normalization}
1 =g_i^*g_i = u_i^\top \overline{\Kx}_\reg^{\frac{1}{2}} \EJ\ES  \ES ^*\EJ \overline{\Kx}_\reg^{\frac{1}{2}}  u_i = u_i^\top  \overline{\Kx} \overline{\Kx}_\reg u_i.
\end{equation}

Next, the subspace of the leading left singular vectors is $\range( \ES ^*\EJ \overline{\Kx}_\reg^{\frac{1}{2}} U_r )$, and, since the columns of $U_r$ are properly normalized, the orthogonal projector onto its range is given by $\Pi_r := \ES ^* \EJ\overline{\Kx}_\reg^{\frac{1}{2}} \U_r \U_r^\top \overline{\Kx}_\reg^{\frac{1}{2}} \EJ \ES $. We therefore have that $[\![ \ECx_\reg^{-\frac{1}{2}}\EYx ]\!]_r =  \Pi_r \ECx_\reg^{-\frac{1}{2}}\EYx = 
\ES ^* \EJ \overline{\Kx}_\reg^{\frac{1}{2}} \U_r \U_r^\top \overline{\Kx} \ET\EJ\ES $. Thus, defining $\V_r := \overline{\Kx} \U_r$, we conclude that 
\begin{equation*}
    \ERRR = \ECx_\reg^{-\frac{1}{2}}  \ES ^*\EJ \overline{\Kx}_\reg^{\frac{1}{2}} \U_r \V_r^\top \ET\EJ\ES  = \ES ^*\EJ \U_r \V_r^\top \ET\EJ\ES. 
\end{equation*}

So, to compute the spectral decomposition, we again apply the theorem on low-rank eigenvalue problems \citep{SS1990} and conclude that eigentriplets of $\ERRR$ are obtained by decomposing 
\begin{equation}\label{eq:small_evp}
\V_r^\top \ET\EJ\ES \ES ^*\EJ \U_r = \V_r^\top \ET\overline{\Kx}\U_r = \V_r^\top \ET\V_r = \textstyle{\sum_{i\in [r]}} \efeval_i\revec_i[\levec_i]^\top, \quad \text{ where }\quad [\levec_i]^\top \revec_i=1,\; i\in[r].
\end{equation}
Namely, left and right eigenfunction of $\ERRR$ corresponding to eigenvalue $\efeval_i$ are $\ES^*\EJ\ET^H\V_r\levec_i$ and $\ES^*\EJ\U_r\revec_i$, respectively. So, to conclude the proof it remains to scale the vectors, making them biorthogonal (paying attention to complex conjugation, since $\ET$ is complex and not necessarily  hermitian)
\[
[\levec_i]^H\V_r^\top \ET\EJ\ES\ES^*\EJ\U_r\revec_i = [\levec_i]^H\V_r^\top \ET\V_r\revec_i = \efeval_i,  
\]
where the last equality is due to \eqref{eq:small_evp}. Assuming, without loss of generality, that all $\efeval_i$'s are nonzero, and normalizing, completes the proof.
\end{proof}

\begin{algorithm}[h!]
\caption{Dual Toeplitz RRR} \label{alg:dual}
\begin{algorithmic}[1]
\REQUIRE
kernel $k$, Toeplitz coefficients $(\weight_i)_{i=-\ell}^{\ell}$, hyperparameters $\reg>0$ and $r\in[n]$. 
\STATE Compute kernel Gram matrix $\Kx\!=[k(x_i,x_j)]_{i,j\in[n]}\!\in\!\R^{n\times n}$
\STATE Center and normalize the Gram matrix, i.e. $\Kx\leftarrow \frac{1}{n}\EJ\Kx\EJ$
\STATE Apply Toeplitz matrix  $\Kx_\anfun\leftarrow \toeplitz_n \Kx \toeplitz_n^\top$ 
\STATE Solve eigenvalue problem $\textsc{K}_{\anfun} \Kx u_i{=}{\sigma}_i^2 \Kreg u_i$, $i\in[r]$, where $\textsc{K}_{\reg}=\Kx {+}\reg I$
\STATE Normalize $u_i\leftarrow u_i / (u_i^\top \Kx\Kreg u_i)^{1/2}$, $i\in[r]$
\STATE Form $\U_r = [u_1\,\vert\,\ldots\,\vert\,u_r]\in\R^{n\times r}$ and compute $\V_r {=} \Kx \U_r$
\STATE Compute eigentriples $(\widehat{\nu_i},w_i^l,w_i^r)$ of $\textsc{V}_r^\top \textsc{M} \V_r$
\STATE Construct $\elefun_i \!=\!\ES^*\toeplitz^\top\V_r \levec_i /\overline{\nu}_i$ and $\erefun_i \!=\!\ES^*\U_r \revec_i$
\ENSURE Estimated eigentriples  $(\widehat{\nu}_i,\elefun_i,\erefun_i)_{i\in[r]}$ of $\anfun(\generator)$ 
\end{algorithmic}
\end{algorithm}

Recalling that we can efficiently perform multiplication with Toeplitz matrices, the most expensive computation in the dual algorithm is in line 4. While naive computations results in cubic complexity w.r.t sample size $n$, using classical iterative solvers, like Lanczos or the generalized Davidson method to compute the leading eigenvalues of the generalized eigenvalue problem, when $r\ll n$ the cost can significantly be reduced, c.f.~\citep{HLAbook}. Furthermore, as proposed in \citep{Turri2024}, randomized algorithms can be used to solve problems of the form \eqref{eq:dual_gep}, leading to efficient numerical routines for the implementation of both Algorithms \ref{alg:primal} and \ref{alg:dual}. 

Once the spectral decomposition of $\anfun(\generator)$ is estimated, given an observable $h\in\RKHS$ we can efficiently approximate 
\begin{equation}\label{eq:spectral_filter}
[\anfun(\generator)]^s h \approx \textstyle{\sum_{i\in[r]}} \,\efeval_i^s\,\scalarpH{\elefun_i,h}\erefun_i\in\RKHS.
\end{equation}

Thus, we can think of our data-driven method as a general Toeplitz based approach to build general Krylov subspace methods, c.f. \citep{saad2011numerical}, for general class of (stochastic) dynamical systems. We believe that this approach can lead to data-driven generalizations of diverse spectral filter methods for fast numerical computations, such as Chebyshev \citep{di2016efficient,zhou2007chebyshev} and resolvent based \citep{polizzi2009density} FEAST filters. This exciting new direction of research could offer new classes of efficient methods that overcome the curse of dimensionality, a typical bottleneck in practical dynamical systems.

Finally, in light of \eqref{eq:solutions} for an unknown SDE in \eqref{Eq: SDE}, whenever $\anfun$ is bijective on the spectrum of $\generator$, by solving $\eeval_i=\anfun(\widehat{\nu}_i)$, $i\in[r]$, Algorithms \ref{alg:primal} and \ref{alg:dual} enable the construction of approximate solutions from a single (long) simulated trajectory by estimating the spectrum of $L$. In such cases, we can approximate dynamics by
%by estimating the dominant spectrum of $\generator$ from a single (long) trajectory of the system obtained through simulations, Algorithms \ref{alg:primal} and \ref{alg:dual} allow constructing solutions as
\begin{equation}
\label{eq:prediction_mean}
\EE[h(X_t)\,\vert\,X_0\!=\!x] \approx \textstyle{\sum_{i\in[r]}} \,e^{\eeval_i t}\,\scalarpH{\elefun_i,h}\erefun_i(x),
\end{equation}
where $\langle \elefun_{i}, h \rangle_{\mathcal H}$ can be computed on the training set via the kernel trick, see \cite{Kostic2022}. As discussed above, this approach is particularly interesting for high-dimensional state spaces, where classical numerical methods become unfeasible due to the curse of dimensionality, making data-driven methods a key tool in fields like molecular dynamics, \cite{schutte2023overcoming}. 

% We prove in Sec. \ref{sec:bounds} that the precision of our method does not depend on the state dimension, but on intrinsic effective dimension of the process, which, together with its linear complexity w.r.t. $d$, makes it an attractive approach in such problems. Finally, we remark that \eqref{eq:prediction_mean} enables forecasting of full state distributions and not just the mean (e.g. $f$ can be an indicator function), noting that this formula extends to all $\mathcal{L}^2_\pi$ functions, at the price of an additional projection error, which leads to predicting the evolution of distributions, c.f. \citep{Kostic2024forecasting}.

\section{Statistically consistent spectral Toeplitz estimators}
\label{sec:consistency}
In this section we first present a general result on the statistical consistency of Toeplitz based estimators, paving the way towards their learning theory, and then discuss specific choices of Toeplitz symbols $\toepsymb$, leading to the generator transforms $\anfun$ summarized in \cref{tab:summary}. 

% \begin{definition}[$\beta$-mixing]
% Let $(X,\mathcal B,\mu,\{\Phi^t\}_{t\in\mathbb R})$ be a deterministic dynamical system,
% where $\Phi^t : X \to X$ is a measurable flow and $\mu$ is a $\Phi^t$-invariant probability
% measure. Define the induced stochastic process by
% \[
% X_t(x) := \Phi^t(x), \qquad x \sim \mu .
% \]
% For $\tau \ge 0$, the $\beta$-mixing (absolute regularity) coefficient is defined as
% \[
% \beta(\tau)
% \;:=\;
% \sup_{t \in \mathbb R}
% \left\|
% \mathcal L_\mu\!\left(
% (X_s)_{s \le t},
% (X_s)_{s \ge t+\tau}
% \right)
% -
% \mathcal L_\mu\!\left((X_s)_{s \le t}\right)
% \otimes
% \mathcal L_\mu\!\left((X_s)_{s \ge t+\tau}\right)
% \right\|_{\mathrm{TV}},
% \]
% where $\mathcal L_\mu$ denotes the law induced by the random initial condition
% $x \sim \mu$.  
% The system is said to be \emph{$\beta$-mixing} (or \emph{absolutely regular}) if
% $
% \beta(\tau) \xrightarrow[\tau\to\infty]{} 0.
% $
% \end{definition}
\begin{definition}[$\beta$-mixing]  %(absolute regularity)]
Let $(\Omega,\mathcal F,\mathbb P)$ be a probability space and let
$\{X_t\}_{t\in\mathbb R}$ be a stochastic process with values in a measurable space
$(\mathcal{X},\mathcal B)$. For $\tau \ge 0$, the $\beta$-mixing coefficient is defined by
\[
\beta_{X}(\tau)
\;:=\;
\sup_{t \in \mathbb R}
\left\|
\mathcal L\!\left(
(X_s)_{s \le t},
(X_s)_{s \ge t+\tau}
\right)
-
\mathcal L\!\left((X_s)_{s \le t}\right)
\otimes
\mathcal L\!\left((X_s)_{s \ge t+\tau}\right)
\right\|_{\mathrm{TV}},
\]
where $\mathcal L$ denotes the joint law induced by $\mathbb P$.
The process is said to be \emph{$\beta$-mixing} (or \emph{absolutely regular}) if
$
\beta_{X}(\tau) \xrightarrow[\tau\to\infty]{} 0.
$
\end{definition}

A deterministic dynamical system $(\mathcal{X},\mathcal B,\mu,\{\Phi^t\}_{t\in\mathbb R})$ fits this
framework by setting $X_t(x)=\Phi^t(x)$ with randomness induced solely by the initial
condition $x\sim\mu$.

Since in applications, we often observe a dynamical system at an evenly spaced sampling rate $\Delta t>0$, we define
$\beta_{X_{\cdot\Delta t}}(\bar{\tau})$ as the $\beta$-mixing coefficient of the discrete-time
process $(X_{n\Delta t})_{n\in \mathbb{N}}$, with lag $\tau = \bar{\tau}\Delta t$, $\bar{\tau}\in\mathbb{N}$.

\begin{theorem}[Consistency of $\ERRR$]\label{thm:consistency}
Let $\dt>0$, and let $\Data=(x_i)_{i=1}^n$ be a equally $\dt$-time spaced trajectory of a dynamical system generated by $\generator$ at a stationary distribution $\im$. Let $\RKHS$ be an RKHS generated by the kernel $k(x,x')$ which is either universal or of the form $k(x,x')=z(x)^\top z(x')$, for some $(z_k)_{k\in[m]}$ such that $(z_k)_{k\in\N}$ forms a basis of $\Lii$. Assume that
\begin{enumerate}
    \item[i)] the dynamical system $(X_t)_{t\in\R_0}$ is beta mixing, and the sequences $l(n),s(n),\bar{\tau}(n) \to \infty$ as $n\to \infty$ such that $n-l > 2 s \bar{\tau}$ and $(s-1) \beta_{X_{\cdot\dt}}(\bar{\tau} -1 ) \to 0 $ as $n\to \infty$.  
    \item[ii)] $\toepsymb$ is analytic in the neighborhood of some spectral set (typically spectrum or numerical range) and $\toepsymb_\ell$ converges to it as $\ell\to\infty$.
\end{enumerate}
The the operator norm error $\|\anfun(\generator)\TS-\TS\ERRR\|_{\HS{\RKHS,\Liioshort}}^2$ converges to zero in probability as $l,s,n\to\infty$. Consequently, for every $i\in[r]$ estimator's eigenvalue $\efeval_i$ converges to a point in the spectrum of $\anfun(\generator)$, and, if the target point is a simple eigenvalue, then also $\erefun_i$ convergence to the corresponding eigenfunction of $\anfun(\generator)$.  
\end{theorem}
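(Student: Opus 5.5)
We split the error into three parts and control each one separately. With $\anfun_\ell(\generator)=\toepsymb_\ell(A_{\dt})$ and $\RRR$ the population RRR estimator \eqref{eq:population_rrr}, built with the truncated weights $(\weight_j)_{|j|\le\ell}$, we write
\begin{equation*}
\anfun(\generator)\TJ\TS-\TJ\TS\ERRR=\big[\anfun(\generator)-\anfun_\ell(\generator)\big]\TJ\TS+\big[\anfun_\ell(\generator)\TJ\TS-\TJ\TS\RRR\big]+\TJ\TS\big[\RRR-\ERRR\big].
\end{equation*}
The three terms are, respectively, the truncation (approximation) error, the regularization/rank bias and the estimation (variance) error.

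\textbf{Truncation term.} We bound it by $\mathrm{err}_\ell(\anfun)\,\norm{\TS}_{\rm HS}$, using \eqref{eq:app_error_fun} and Crouzeix's Theorem \ref{thm:crouzeix}. By assumption ii), $\toepsymb_\ell\to\toepsymb$ uniformly on the relevant spectral set, so this term vanishes as $\ell\to\infty$.

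\textbf{Bias term.} We use that $\RRR$ minimizes the regularized excess risk \eqref{eq:reg_ex_risk}. Because $\RKHS$ is universal, or the dictionary $(z_k)$ is a basis of $\Lii$, and $\TS$ is Hilbert–Schmidt, $\anfun_\ell(\generator)\TJ\TS$ admits finite-rank approximations $\TJ\TS G$ with arbitrarily small error. This follows the argument of \citep{Kostic2022}: the bias is bounded by $\norm{(I-P_{\RKHS,r})\anfun_\ell(\generator)\TJ\TS}+\reg^{1/2}\norm{G}$. We then let $\reg\to0$ and let the rank and dictionary size grow slowly. One subtlety is that the rank-$r$ bias is a tail singular-value sum and need not vanish for fixed $r$. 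We therefore either let $r$ grow, or interpret the consistency as holding for the leading invariant part. We will state this explicitly.

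\textbf{Variance term, the main obstacle.} We need $\norm{\ECxy{j}-\Cxy{j}}\to0$ in probability uniformly over $|j|\le\ell(n)$, for dependent data. Since $\EYx-\Yx=\sum_{|j|\le\ell}\weight_j(\ECxy{j}-\Cxy{j})$ and the coefficients are absolutely summable (by analyticity), a uniform-in-$j$ bound suffices, up to the factor $\sum|\weight_j|$.

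For each lag $j$ we use the blocking technique. The indices $1,\dots,n-l$ are split into $2s$ alternating blocks of length $\bar\tau$; this is possible since $n-l>2s\bar\tau$. Berbee's coupling replaces the odd blocks by independent copies at total variation cost $(s-1)\beta_{X_{\cdot\dt}}(\bar\tau-1)$, which tends to $0$ by assumption i). On the independent blocks we apply a Hilbert-space Bernstein/Pinelis inequality to the bounded HS-valued variables $\phi(x_i)\otimes\phi(x_{i+j})$, which gives a deviation of order $\sqrt{\log(\ell/\delta)/s}$; a union bound covers all $2\ell+1$ lags. The centering by the empirical mean is handled the same way.

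With $\reg=\reg(n)\to0$ slowly enough that $\reg^{-1}\big(\sum|\weight_j|\big)\sqrt{\log\ell/s}\to0$, the perturbation bounds for RRR estimators from \citep{kostic2023sharp} turn these covariance errors into $\norm{\RRR-\ERRR}\to0$. Those bounds use $\norm{(\ECxy{0}+\reg I)^{-1/2}}\le\reg^{-1/2}$, and the SVD truncation is stable under a spectral-gap assumption at index $r$.

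\textbf{Spectral consequences.} Once the operator-norm error vanishes, we apply standard spectral perturbation for finite-rank estimators, in the form of the bounds $|\efeval_i-\nu|\lesssim\norm{\cdot}/\norm{\TS\erefun_i}$ from \citep{kostic2023sharp}. These show that each $\efeval_i$ approaches $\Spec(\anfun(\generator))$ in probability. For a simple isolated eigenvalue, Riesz-projector continuity then gives convergence of $\erefun_i$ to the corresponding eigenfunction.
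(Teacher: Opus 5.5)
Your outline matches the paper's at the top level. The truncation term is treated exactly as in the paper, via ${\rm err}_\ell(\anfun)$. The paper then disposes of the whole remaining term $\|\anfun_\ell(\generator)\TJ\TS-\TJ\TS\ERRR\|$ in one line by citing Proposition C.7 of \citep{kostic2025laplace}; you instead try to reprove that black box through a bias/variance split. Your remark on the rank bias is correct, and the paper's proof does not address it. Since $\TJ\TS\ERRR$ has rank at most $r$, Eckart--Young--Mirsky gives $\|\anfun(\generator)\TJ\TS-\TJ\TS\ERRR\|_{\rm HS}^2\ge\sum_{i>r}\sigma_i^2(\anfun(\generator)\TJ\TS)$. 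This is positive whenever $\anfun(\generator)\TJ\TS$ has infinite rank, so the claim genuinely needs $r\to\infty$ (and $m\to\infty$ for dictionaries).

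\textbf{The step that fails is the coupling cost in the variance term.} The condition $(s-1)\beta_{X_{\cdot\dt}}(\bar\tau-1)\to0$ in i) is what Berbee's lemma needs for the lag-one pair process $(x_i,x_{i+1})$. For lag $j$, the block sum $\sum_{i\in I_k}\phi(x_i)\otimes\phi(x_{i+j})$ over a block $I_k$ of length $\bar\tau$ depends on the trajectory over a window of length $\bar\tau+j$. Consecutive odd blocks are therefore separated by only about $\bar\tau-j$ steps; equivalently, the pair process $(x_i,x_{i+j})$ has $\beta$-coefficient at most $\beta_{X_{\cdot\dt}}(\tau-j)$, not $\beta_{X_{\cdot\dt}}(\tau-1)$. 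The coupling costs $(s-1)\beta_{X_{\cdot\dt}}(\bar\tau-j)$, and covering all $|j|\le\ell(n)$ requires $(s-1)\beta_{X_{\cdot\dt}}(\bar\tau-\ell)\to0$.

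Assumption i) links $\ell$ and $\bar\tau$ only through $n-\ell>2s\bar\tau$, so nothing excludes $\ell\ge\bar\tau$, where the blocks are not separated at all. To close the argument you must block the window process $(x_i,\dots,x_{i+\ell})$ and strengthen i), for example with blocks of length $\bar\tau+\ell$ under $n-\ell>2s(\bar\tau+\ell)$. If $n-\ell\gg 2s\bar\tau$, you should also enlarge the blocks, using monotonicity of $\beta$, so the leftover indices are negligible. With i) exactly as stated your self-contained argument does not go through; the paper never meets this issue only because it defers the whole statistical term to the cited proposition.

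\textbf{Secondary points.}
\begin{enumerate}
\item The spectral-gap assumption at index $r$ is an extra hypothesis, and it becomes awkward once $r=r(n)\to\infty$. It is also unnecessary: bound the error rather than $\|\RRR-\ERRR\|$. With $B=(\Cxy{0}+\reg I)^{-1/2}\Yx$ and $\widehat B$ its empirical counterpart, Eckart--Young plus Weyl (Mirsky in HS norm) give $\|B-[\![\widehat B]\!]_r\|\le 2\|B-\widehat B\|+\sigma_{r+1}(B)$ with no gap, as in \citep{Kostic2022,kostic2023sharp}.
\item Assumption ii) is analyticity near a spectral set, not on an annulus around $\mathbb{T}$, so it does not give absolute summability of $(\weight_j)$. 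For the band-pass symbol \eqref{eq:toep_inv}, the coefficients decay only like $1/|j|$ and $\sum_{|j|\le\ell}|\weight_j|$ is unbounded in $\ell$. Keep that factor in your rate condition, as you do, and drop the summability claim.
\item The eigenvalue bound is useful only if the metric distortion $\|\erefun_i\|_{\RKHS}/\|\TS\erefun_i\|$ stays bounded in probability as $\reg\to0$ and $r\to\infty$.
\item Eigenfunction convergence needs a gap-plus-distortion argument as in \citep{kostic2023sharp}, not Riesz-projector continuity, because $\ERRR$ and $\anfun(\generator)$ act on different spaces.
\end{enumerate}
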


\begin{proof}
For any estimator $\EEstim$ of $\anfun(\generator)$ we have
\begin{align*}
\norm{\anfun(\generator)\TJ\TS\!-\!\TJ\TS\EEstim}_{\RKHS \to \Liioshort}  &\leq \norm{(\anfun(\generator) - \anfun_\ell(\generator))\TJ\TS}_{\RKHS \to \Liioshort}  + \norm{\anfun_\ell(\generator)\TJ\TS\!-\!\TJ\TS\EEstim}_{\RKHS \to \Liioshort}\\
&\leq \norm{(\anfun(\generator) - \anfun_\ell(\generator))}_{\Liioshort \to \Liishort}\norm{\TJ\TS}_{\RKHS \to \Liioshort}  + \norm{\anfun_\ell(\generator)\TJ\TS\!-\!\TJ\TS\EEstim}_{\RKHS \to \Liioshort}\\
&\leq {\rm err}_\ell(\anfun) \norm{\TJ}_{\Liioshort \to \Liioshort}  \norm{\TS}_{\RKHS \to \Liioshort}  + \norm{\anfun_\ell(\generator)\TJ\TS\!-\!\TJ\TS\EEstim}_{\RKHS \to \Liioshort}.
\end{align*}
Note that $\norm{\TJ}_{\Liioshort \to \Liioshort} = 1$ and $\norm{\TS}_{\RKHS \to \Liioshort}\lesssim 1$. In addition under (ii) we have ${\rm err}_\ell(\anfun) \to 0$ as $l \to \infty$.

We consider now the term $\norm{\anfun_\ell(\generator)\TJ\TS\!-\!\TJ\TS\EEstim}_{\RKHS \to \Liioshort}$ when $\EEstim$ is the RRR estimator defined in \eqref{eq:empirical_rrr}. Under (i) the dynamical system is $\beta$-mixing. Hence for any $\delta\in (0,1)$ and $s\geq 1$, there exists a large enough $\bar{\tau}>0$ such that $\delta > 2(s-1) \beta_{X_{\cdot \dt}}(\bar{\tau}-1)$. Next as we consider the asymptotic $n\to \infty$, we can assume that $n$ is large enough such that $n-l \geq 2 s \bar{\tau}$. Then applying Proposition C.7 in \citep{kostic2025laplace} guarantees the convergence in probability: $\norm{\anfun_\ell(\generator)\TJ\TS\!-\!\TJ\TS\EEstim}_{\RKHS \to \Liioshort} \stackrel{\mathbb{P}}{\to} 0$ as $n\to \infty$. 
\end{proof}

Next, we discuss several useful examples of Toeplitz estimators. While some were essentially known (though typically without learning in $\Liio$), many are new, stemming from our new framework of Toeplitz linear algebra for generators of continuous Markov semigroups.

\textbf{Transfer operators.} As a first example we revisit classical Koopman/Transfer operators, which we can split in self-adjoint and skew-adjoint part
\begin{equation*}
\LKoop_t =e^{t \generator} = \frac{e^{t \generator}+e^{t \generator^*}}{2} + \frac{e^{t \generator}-e^{t \generator^*}}{2}, 
\end{equation*}
which is reminiscent of the highly influential paper of \citet{bai2003hermitian} on Hermitian-Skew Hermitian splittings for solving linear problems. 

Now, clearly, for $\generator^*=-\generator$ we have that $A_t$ is unitary operator with hyperbolic-trigonometric splitting into self-adjoint part $\cosh(t \generator)$ and skew-adjoint part $\sinh(t\generator)$. On the other hand, when $\generator^*=\generator$ we have that $A_t =\cosh(t \generator)$. As a consequence, we obtain the first four rows of \cref{tab:summary}. Furthermore, for each of these estimators we have $\toepsymb=\toepsymb_1 =\toepsymb_\ell$ for all $\ell\in\N$ directly obtaining the consistency of the corresponding empirical estimators, which, importantly, as shown in Theorems \ref{thm:primal} and \ref{thm:dual} preserve location of the spectrum of $\generator$ on either real or imaginary axis. 

\textbf{Transfer operator's resolvent.} Eigenvalues, while informative about long-term behavior, fail to capture transient dynamics of the full time evolution of the process whenever the transfer operator is non-normal, that is when $A_t A_t^* \,{\neq}\, A_t^*A_t$, \citep{TrefethenEmbree2020}. In contrast, the resolvent of $A_t$ defined by $\omega\mapsto(\omega\Id\!-\!A_t)^{-1}$, $\omega\in\rho(A_t)$, provides a more comprehensive view of the dynamics, making it the core object of spectral theory. In particular, for non-normal transfer operators the transient growth of powers
can be bounded, c.f. \citep{el2002extremal},  
\begin{equation}\label{eq:kreiss}
{\cal K}(A_t)\leq \sup_{k\in\N_0}\|A_t^k\|_{\Liioshort\to\Liioshort}\leq \frac{e}{2}[{\cal K}(A_t)]^2\quad\text{ where }\quad{\cal K}(A_t):=\sup_{\Re(\mu)>0} \|(e^{\mu}-A_t)^{-1}\|(e^{\Re(\mu)}-1)   
\end{equation}
denotes the Kreiss constant. This constant is completely determined by the resolvent growth outside the unit disc and is fundamental tool in understanding how the system returns to equilibrium after perturbations. 

To show estimate the Kreiss constant, we consider the resolvent of a Koopman/Transfer operator $\anfun(\generator)= (e^{\mu}-A_t)^{-1} = (e^{\mu}-e^{t L})^{-1}$ for $\Re(\mu)>0$. Since for $z\in\mathbb{D}$ we have $|ze^{-\mu}|<1$, we can expand a Toeplitz symbol $\toepsymb(z)=(e^{\mu}-z)^{-1}=e^{-\mu}(1-ze^{-\mu})^{-1}$ as classical Von Neumann series 
\begin{equation}\label{eq:toep_res_to}
\textstyle{\toepsymb(z)= \sum_{j=0}^{\infty} e^{-\mu} [ze^{-\mu}]^j=\lim_{\ell\to\infty}=\sum_{j=0}^{\ell} e^{-(j+1)\mu} z^j= \lim_{\ell\to\infty}\toepsymb_\ell(z)},
\end{equation}
directly obtaining the consistency. While for a generic $\mu$, $\anfun$ is neither conjugate invariant nor equivariant, for $\mu>0$ and $\generator^*=\generator$ we can use symmetric Toeplitz matrices, obtained by replacing $z^j$ with $(z^j + \overline{z}^j)/2$ in \eqref{eq:toep_res_to}, yielding estimators with real spectrum. Finally, we note that a special case of this estimator ($r=n$ and learning in $\Lii$ with uncentered features) was introduced in \citep{hashimoto2020krylov} for discrete-time dynamical systems. 

\textbf{Generator's resolvent.} While powerful tools, transfer operator estimators are limited by the size of time-lag $\dt$, which obscures the time-scales smaller than $\dt$ and oscillatory frequencies higher than $1/(2\dt)$. Moreover, statistical learning guarantees for estimation of the transfer operators collapses when $\dt\to0$, see \citep{kostic2025laplace}, and one needs to resort to generator estimation to mitigate this issue. Since the differential operator $\generator$ is typically unbounded, it is more natural to work with the resolvent $(\shift\Id\!-\!\generator)^{-1}$, $\shift\in\rho(\generator)$. Our Toeplitz based framework naturally encompasses resolvent operator through its characterization via the Laplace transform, (see for instance \citep{Bakry2014}, equation (A.1.3)) as 
\begin{equation}\label{eq:resolvent}
\anfun(L){:=}(\shift\Id\!-\!\generator)^{-1} {=} \textstyle{\int_{0}^{\infty}} \LKoop_{t} e^{-\shift t}dt\;\implies\; \toepsymb_\ell(A_{\Delta_t}):= \sum_{j=0}^{\ell} \weight_j A_{\dt}^j\to \anfun(\generator)\text{ strongly when }\dt{\to} 0, \ell{\to}\infty,
\end{equation}
where \(\weight = (\weight_j)_{j=0}^{\ell}\) are weights given by the trapezoid rule with $\ell\geq1$ points and time-discretization %$(t_j)_{j=0}^{\ell}$ s.t. 
$\dt>0$, that is
\begin{equation}\label{eq:trapezoid}
t_j \!=\! j\dt\quad\text{ and }\quad
\weight_j \!=\!
\begin{cases}
    \frac{\dt}{2} \,e^{-\shift\,t_j} &\text{if $j\!\in\!\{0,\ell\}$,}\\
    \dt \,e^{-\shift\,t_j} & \text{if $1\leq j\!\leq\! \ell\!-\!1$}.
\end{cases}
%\omega_j = \begin{cases}
%    \frac{1}{2} &\text{if $j\in\{0,\ell\}$,}\\
%    1 & \text{if $1\leq j\leq \ell-1$.}
%\end{cases}
\end{equation}
Furthermore, if the generator is self-adjoint, for $\mu\in\R_+$ we can obviously use symmetric version of Toeplitz symbol 
\begin{equation}\label{eq:toep_res_sym}
\toepsymb_\ell(z) = \textstyle{\tfrac{1}{2}\big[\weight_0+\sum_{j=-\ell}^{\ell} \weight_{|j|} z^j\big],}    
\end{equation} 
ensuring that the estimated eigenvalues are real by construction. While the previous strong convergence result provides consistency in estimating $\|(\mu-\generator)^{-1}f\|_{\Liishort}$ for any $f\in\Lii$ and $\Re{\mu}>0$, stronger operator norm convergence of Theorem \ref{thm:consistency} can be shown for sectorial operators, c.f. see \eqref{eq:sectorial} and \eqref{eq:app_error_fun}, as discussed in \citep{kostic2025laplace} for learning in $\Lii$.

\textbf{Band-limited pseudo-inverse.} 
Finally, we consider the case of deterministic systems and the filtered pseudo-inverse
$\anfun(L)=P_{(\theta_{\min},\theta_{\max})}L_0^{-1}$, where $L_0^{-1}$ denotes the inverse
of the generator restricted to the orthogonal complement of its kernel and
$P_{(\omega_{\min},\omega_{\max})}$ is a spectral projector onto frequencies in the range
$(\omega_{\min},\omega_{\max})$. The corresponding Toeplitz symbol
$\toepsymb(z)=\one_{\{|{\rm Arg}(z)|\in[\theta_{\min},\theta_{\max}]\}}/{\rm Ln}\,z$
acts as a spectral band-pass filter, isolating oscillatory components while suppressing
low-frequency modes. As a consequence, the dominant spectrum of the resulting Toeplitz
estimator consists precisely of the frequencies in the prescribed range, making this
construction well suited for deterministic dynamics on simple attractors with a discrete
imaginary spectrum. While frequency-selective constructions for Koopman generators have been studied in \citep{Mezic2005,giannakis2019data}, the Toeplitz-symbol formulation and the corresponding estimators employed here are new and specific to our framework.

First, we note that for deterministic systems $\Spec(\generator){\in} i\R$, so we consider the Toeplitz symbol on the unit circle, using $\omega {\in }(-\pi,\pi]$ (principal branch of the complex logarithm), $0 {\leq} \omega_{\min} {<} \omega_{\max} {\leq} \pi$, and $\theta {= }\frac{\omega}{2\pi}$ being the frequency in Hz, that 
\begin{equation}
    \label{eq:toeplitzsymbol}
T(e^{i\omega}) = -i \cdot \frac{\mathbf{1}_{\omega_{\min} \leq |\omega| \leq \omega_{\max}}}{\omega}.
\end{equation}

For the $\ell$-truncated symbol $T_\ell$ we need the Fourier coefficients $a_j$, defined by
\[
a_j = \frac{1}{2\pi} \int_{-\pi}^{\pi} T(e^{i\omega}) e^{-i\,j\,E\omega} \, d\omega
= -\frac{i}{2\pi} \int_{-\pi}^{\pi} \frac{\mathbf{1}_{\omega_{\min} \leq |\omega| \leq \omega_{\max}}}{\omega} e^{-ij\omega} \, d\omega.
\]

By symmetry of the indicator domain and oddness of $1/\omega$,
\begin{align*}
a_j &= -\frac{i}{2\pi} \left[ \int_{\omega_{\min}}^{\omega_{\max}} \frac{e^{-ij\omega}}{\omega} \, d\omega 
+ \int_{-\omega_{\max}}^{-\omega_{\min}} \frac{e^{-ij\omega}}{\omega} \, d\omega \right] = -\frac{i}{2\pi} \int_{\omega_{\min}}^{\omega_{\max}} \frac{e^{-ij\omega} - e^{ij\omega}}{\omega} \, d\omega = -\frac{1}{\pi} \int_{\omega_{\min}}^{\omega_{\max}} \frac{\sin(j\omega)}{\omega} \, d\omega.
\end{align*}
Using the sine integral $\mathrm{Si}(x) = \int_0^x \frac{\sin t}{t} \, dt$,
\begin{equation}\label{eq:toep_inv}
a_j = -\tfrac{1}{\pi} \left[ \mathrm{Si}(j\omega_{\max}) - \mathrm{Si}(j\omega_{\min}) \right],\quad\text{ for }
\quad j \neq 0,\quad\text{ and }\quad a_0 = 0.
    \end{equation}
Since $\mathrm{Si}(-x) = -\mathrm{Si}(x)$, we have $a_{-j} = -a_j$, reflecting the odd symmetry of $T(e^{i\omega})$ in $\omega$.

For the truncated Toeplitz operator $T_\ell$ with symbol $T(e^{i\omega})$, the $\ell \times \ell$ truncation corresponds to taking the partial Fourier sum. Hence, when $T(e^{i\omega})$ has jump discontinuities at $\omega = \pm\omega_{\min}, \pm\omega_{\max}$, the truncation exhibits the Gibbs phenomenon: near each discontinuity, overshoots of approximately $8.95\%$ of the jump height persist as $\ell \to \infty$ \citep{hewitt1979gibbs}. To mitigate Gibbs oscillations in the truncated Toeplitz approximation, we can apply Jackson smoothing, by replacing $T_\ell(e^{i\omega})$ with a damped sum 
$T'_\ell(z) = \sum_{j=-\ell}^{\ell} a'_j z^{j}$,
where the Jackson damping factors  are chosen to decay smoothly with $|j|$ and ensure uniform convergence, that is we set
\begin{equation}\label{eq:jackson}
s_j=1 - \Big(\tfrac{|j|}{\ell+1}\Big)^2,\quad\text{ and scale}\quad a_j'= s_j\,a_j.    
\end{equation}

Indeed, since $T$ is a piecewise-$C^1$ with modulus of continuity bounded by $\delta/\omega_{\min}^2$ truncated symbol with Jackson factors satisfies, c.f.  \cite{zygmund2002trigonometric},
\[
\|T - T'_\ell\|_\infty \leq \frac{c}{\ell\omega_{\min}^2},
\]
where $C$ is a universal constant. So, we have eliminated the Gibbs overshoot at the cost of reducing the convergence rate from spectral to algebraic.

With this in mind, to apply Theorem \cref{thm:consistency}, since $T'_\ell$ is analytic on $\mathbb{T}$, it remains to assure that Toeplitz symbol $T$ is analytic on some spectral set. Since $L^*=L$ is normal, we can consider the spectrum of $L$, c.f. \eqref{eq:app_error_fun}. Clearly, the only problematic points for $\toepsymb$ are discontinuities $0\leq \omega_{\min}$ and $\omega_{\max}\leq\pi$. Therefore, whenever $i\omega_{\min},i\omega_{\max}\not\in\Spec(\generator)$, the error is analytic in the neighborhood of the spectrum and Theorem \ref{thm:consistency} applies to any deterministic beta-mixing dynamical system, necessarily on a simple attractor since spectrum of chaotic systems fills the whole imaginary axis.    

\textbf{Chebyshev filters for deterministic systems.}
We now focus on the case of possibly chaotic deterministic dynamical systems at equilibrium. By the spectral theorem for skew-adjoint operators  \citep{Kato} there exists a projection-valued measure \(E(\omega)\) such that
\[
L = \int_{\mathbb{R}} i\omega \, dE(\omega),
\qquad
A_t = \int_{\mathbb{R}} e^{i\omega t}\, dE(\omega),\quad\text{ and }\quad \anfun(L) = \int_{\mathbb{R}} f(i\omega)\, dE(\omega)
\]
holds for any bounded Borel function \(f\) on \(i\mathbb{R}\).

In contrast to the self-adjoint case, polynomial approximations in \(L\) are no longer appropriate; instead, trigonometric and Chebyshev-type expansions in the unitary group \(\{A_t\}_{t\in\mathbb{R}}\) provide the natural approximation framework. In particular, for chaotic deterministic systems, the spectral measure of \(L\) is typically purely continuous and may fill the entire imaginary axis. In such cases individual eigenvalues are absent or physically irrelevant, and classical eigenvalue-based methods such as EDMD fail to converge. Toeplitz-based trigonometric and Chebyshev filters, however, remain well-defined at the level of spectral measures. They enable stable approximation of spectral densities, band-limited projectors, and resolvent-type operators even when \(\sigma(L)=i\mathbb{R}\). This makes the approach particularly well suited for chaotic dynamics, where the goal is not to isolate eigenvalues but to extract coherent frequency bands and dynamically meaningful observables.

We consider a general (not necessarily even) Borel function $f : i\mathbb{R} \to \mathbb{C}$,
defined on the spectrum of $L$ and admitting a trigonometric expansion on the Nyquist interval
$[-\pi/\Delta t,\pi/\Delta t]$. Writing $\lambda = i\omega$, we decompose
\[
\anfun(i\omega) = \anfun_{\mathrm{e}}(i\omega) + \anfun_{\mathrm{o}}(i\omega),\quad\text{ 
where }\quad
\anfun_{\mathrm{e}}(i\omega) := \tfrac12\big(\anfun(i\omega) + \anfun(-i\omega)\big)
\quad\text{ and }\quad
\anfun_{\mathrm{o}}(i\omega) := \tfrac12\big(\anfun(i\omega) - \anfun(-i\omega)\big)
\]
are the even and odd parts, respectively. 

This leads us to \textbf{trigonometric expansions}. Namely, when there exist coefficients $\alpha_k,\beta_k \in \mathbb{C}$ such that
\[
\anfun(i\omega)
=
\alpha_0
+
\textstyle{\sum_{k=1}^\ell}
\left(
\alpha_k \cos(k\omega\Delta t)
+
\beta_k \sin(k\omega\Delta t)
\right),
\]
by the spectral theorem, and using
\[
\cos(k\Delta t\,L) = \tfrac12\big(A_{\dt}^k + A_{\dt}^{*k}\big),
\qquad
\sin(k\Delta t\,L) = \tfrac{1}{2\mathrm{i}}\big(A_{\dt}^k - A_{\dt}^{*k}\big),
\]
we obtain a Laurent--Toeplitz representation
\begin{equation}\label{eq:toep_trig}
\anfun(L) = \textstyle{\sum_{k=-\ell}^\ell a_k A_{\dt}^k},
\end{equation}
with coefficients
\[
a_0 = \alpha_0,
\qquad
a_k = \tfrac12\alpha_k + \tfrac{1}{2\mathrm{i}}\beta_k,
\qquad
a_{-k} = \tfrac12\alpha_k - \tfrac{1}{2\mathrm{i}}\beta_k,
\quad k\ge 1.
\]

Furthermore, the even and odd parts also admit independent \textbf{Chebyshev expansions}
\begin{equation}
\anfun_{\mathrm{e}}(i\omega)
=
\sum_{k=0}^\ell b_k \,\mathcal{T}_k(\cos(\omega\Delta t)),\qquad\text{ and }\qquad
\anfun_{\mathrm{o}}(i\omega)
=
\sin(\omega\Delta t)\sum_{k=0}^{\ell-1} c_k \,\mathcal{U}_k(\cos(\omega\Delta t)),
\end{equation}
where $\mathcal{T}_k$ and $\mathcal{U}_k$ denote the Chebyshev polynomials of the first and second
kind, respectively. Again using the spectral theorem we obtain
\begin{equation}\label{eq:toep_cheb}
\anfun(L)
=
\sum_{k=0}^\ell b_k\,
\mathcal{T}_k\!\left(\tfrac12(A_{\dt}+A_{\dt}^*)\right)
+
\sin(\Delta t\,L)
\sum_{m=0}^{M-1} c_m\,
\mathcal{U}_m\!\left(\tfrac12(A_{\dt}+A_{\dt}^*)\right).
\end{equation}

Since both families satisfy the three-term recurrences
\[
\mathcal{T}_{k+1}(B) = 2B\mathcal{T}_k(B) - \mathcal{T}_{k-1}(B),
\qquad
\mathcal{U}_{k+1}(B) = 2B\mathcal{U}_k(B) - \mathcal{U}_{k-1}(B),
\]
with $B = \tfrac12(A_{\dt}+A_{\dt}^*)$. Each multiplication by $B$ increases the Toeplitz bandwidth by one, so that $\anfun(L)$ is represented by a banded Toeplitz operator whose coefficients are generated by
local three-term recurrences.

Even when $\sigma(L)=i\mathbb{R}$, as is typical for chaotic deterministic dynamics, the operator
$\anfun(L)$ remains well defined as a bounded spectral multiplier. The Toeplitz representation provides
a data-driven realization of this functional calculus using only powers of $A_{\dt}$ and $A_{\dt}^*$, enabling
frequency-selective filtering and spectral isolation in the absence of point spectrum.

\paragraph{Implications.}
This formulation transforms functional calculus for skew-adjoint generators into structured
Toeplitz linear algebra. We conclude with few important remarks  on:
\begin{itemize}
    \item \textbf{Deterministic systems:} The skew-adjoint setting highlights the flexibility of Toeplitz-based Koopman estimators: by exploiting unitary structure and time-reversal equivariance, 
    we can approximate rich classes of operator-valued functions using only forward and backward time shifts. This accommodates arbitrary (even or odd) spectral filters, admits stable Chebyshev and Jackson--Fejér regularizations, and applies uniformly to quasiperiodic, mixing, and fully chaotic deterministic systems. In such a way we pave a way towards numerically stable, data-driven spectral methods that remain valid in the presence of continuous spectra and chaos, substantially extending the scope of Koopman analysis beyond classical finite-dimensional approximations. 
    \item \textbf{Jackson smoothing}: (1) Smearing of spectral discontinuities: The transition region near a jump in symbol has width $\mathcal{O}(1/\ell)$ in frequency, (2) Reduced convergence rate: Away from discontinuities, where the original Toeplitz truncation converges exponentially, the smoothed version converges only as $\mathcal{O}(1/\ell)$, (3) Practical implication: For large $\ell$, the smoothed $T_\ell$ better approximates the infinite Toeplitz operator's spectral properties near discontinuities, but may require larger $\ell$ for the same accuracy away from jumps, (4) Alternative smoothing kernels (Fej\'{e}r, Lanczos) offer different compromises between overshoot suppression and convergence rate \citep{lund1992sinc}, which will be the topic for future studies along other types of Toeplitz spectral filters of the generator.
    \item \textbf{Parallel eigensolvers:} Several applications require approximating family  of filters $(\anfun_\mu)_{\mu\in \C}$, for example when estimating the Kreiss constant via \eqref{eq:kreiss}. This task can become prohibitively expensive if each filter is treated independently. However, Theorems \ref{thm:primal} and \ref{thm:dual} show that while the matrix pencils of generalized positive definite eigenvalue problems in \eqref{eq:primal_gep} and \eqref{eq:dual_gep} depends nonlinearly on the parameter $\mu$, the leading matrix remains fixed and positive definite for all $\mu$. This structural invariance enables significant computational savings: factorizations or preconditioners for the leading matrix can be computed once and reused across all parameter values, and the computations for different $\mu$ can be parallelized efficiently. Techniques such as preconditioned iterative eigensolvers \citep{saad2011numerical} with recycling \citep{soodhalter2020survey}, and solvers able to exploit the nonlinear structure in $\mu$ \citep{sorensen1992implicit} can be further used to accelerate the solution process.
\end{itemize}

\section{Experiments}
\label{sec:exps}
%In this section, we illustrate the behaviour of our method. In particular, we implement dual Algorithm \ref{alg:dual} with universal approximation properties, and show that LaRRR was able to recover eigenvalues and eigenfunctions of the infinitesimal generator of the process. In the first experiment in 1D, we measure the error to the ground truth, while in the second (realistic molecular dynamics) one we recover consistent results with independent studies~\citep{deepTICA}.  

In this section, we demonstrate the usefulness of the Toeplitz based spectral estimators in contrast to standard EDMD and RRR estimators of the Koopman/Transfer Operators. Since the special case of generator's resolvent of stochastic systems in Examples \ref{ex: Langevin} and \ref{ex:OU} was already empirically studied with primal and dual algorithms in \citep{kostic2025laplace}, in what follows we focus on different transforms and a deterministic system of Example \ref{ex: Duffing} and study the spectrum of the Duffing oscillator on a simple attractor and on a strange attractor (chaotic regime). 

We recall that the generator is skew-adjoint, and hence the spectrum $\Spec(\generator)$ is purely imaginary. In both cases, we simulate the system at $\dt = 0.1\,[sec]$ and collect the points $\tilde{X}_i=(x_{t_0+i \dt},y_{t_0+i\dt})$ on the trajectory after some initial burn-in period $t_0>0$. Then we construct the training set $\Data=(X_i)_{i\in[n]}$ by taking $X_i = \tilde{X}_i+\sigma\,\xi_i$, with $\xi_i$ iid standard Gaussian and $\sigma>0$ being observational noise. 

In the following, we will use the primal algorithm, choosing features $m=100$ dimensional features made from up to $4$th order monomials of $x$ and $y$ coordinates in the past time window of length $10$. Without rank reduction, this estimator of Koopman operator is known as Extended Henkel DMD  \citep{colbrook2023multiverse}.  

\textbf{Simple attractor.} We consider parameters $\alpha=0.5$, $\beta=0.625$, $\gamma=2$, $\delta=1.5$ and $\omega=1$, yielding a stationary regime shown in \cref{fig:Duffing_1} (left). It is well know that the eigenvalues of the generator are $\lambda_k=i k \omega = i k$, $k\in\Z$, so the base frequency is $1/(2\pi)\approx 0.1592$. Since the standard Henkel and RRR estimators work well with perfectly observed $n=8000$ samples (noiseless samples), we consider more realistic setting of noisy observations and set $\sigma = 0.3$. The training trajectory $(\tilde{X}_i)_{i\in[n]}$ and noisy training samples $({X}_i)_{i\in[n]}$ are shown in \cref{fig:Duffing_1} (left). Once the models are trained we use as the initial point $(x_{t_0+(n+1)\dt},y_{t_0+(n+1)\dt})$ and test the prediction using \eqref{eq:prediction_mean} for the time $t=500 \dt = 50\,[sec]$. Results are shown for Koopman operator (baseline), Hyperbolic Sine (imaginary spectrum by design) and the Inverse projected on the Band (imaginary spectrum with focus on the lower frequency band) given in \eqref{eq:toep_inv}. \cref{fig:Duffing_2} are with rank regularization for $r=10$ (left) and without any regularization, that is $r=m$, (right). The experiment is repeated in 10 trials (resampling the noise), and the mean predictions are plotted in green dashed line, while the $95\%$ confidence region is shaded.

\begin{figure}[ht!]
    \centering
   \includegraphics[scale=0.59]{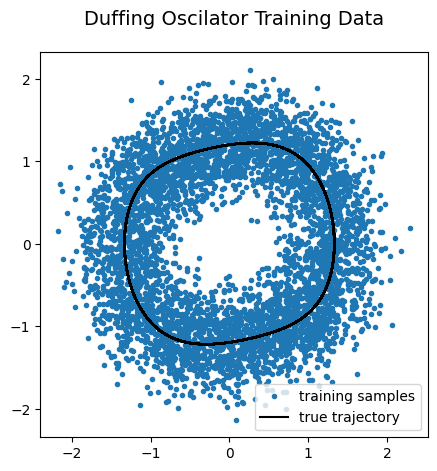}
   \hspace{2.0truecm}
   \includegraphics[scale=0.59]{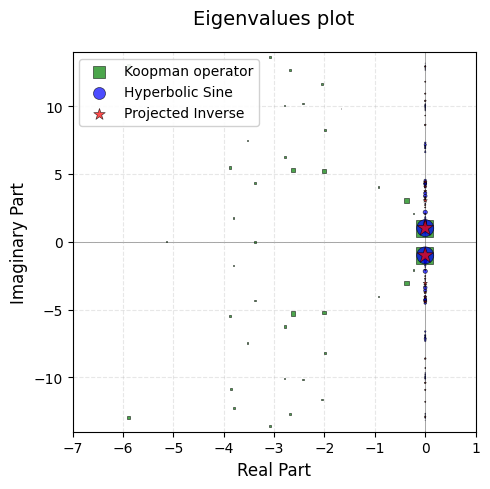}\\
    \caption{Duffing oscilator on a simple attractor: training trajectory and samples (left), and estimated spectrum for three different models (right). True base frequency is $1/2\pi \approx 0.1592$.}
    \label{fig:Duffing_1}
\end{figure}

\begin{figure}[ht!]
    \centering
   \includegraphics[scale=0.53]{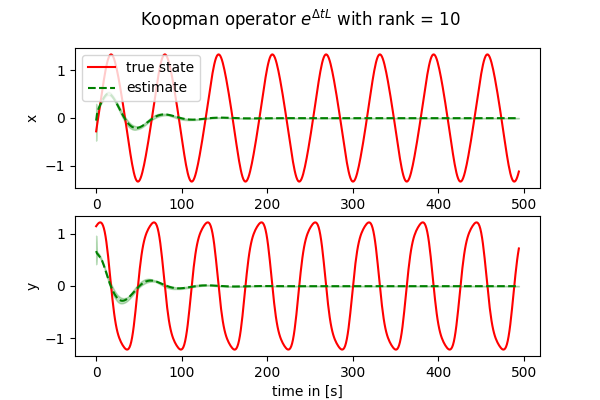}
   \includegraphics[scale=0.53]{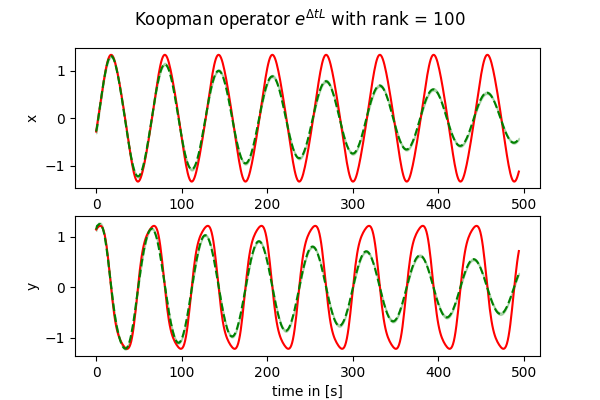}\\
   \vspace{0.3truecm}
   \includegraphics[scale=0.53]{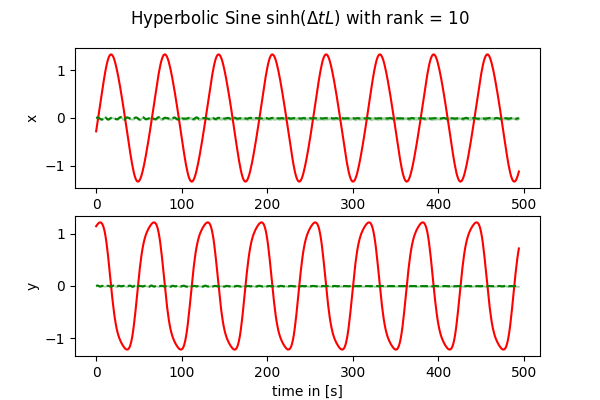}
    \includegraphics[scale=0.53]{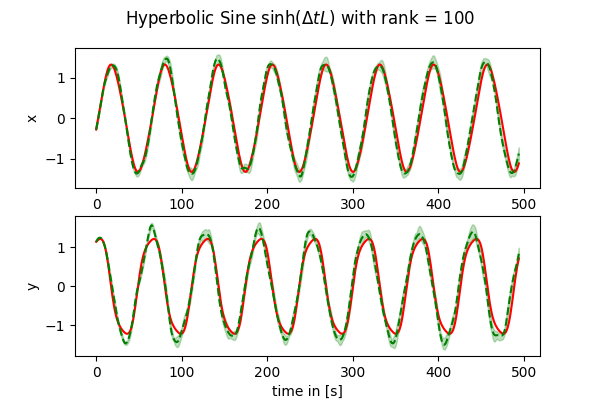}\\
    \vspace{0.3truecm}
   \includegraphics[scale=0.53]{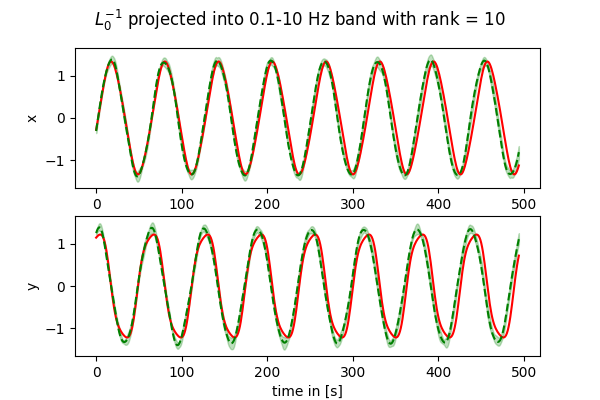}
   \includegraphics[scale=0.53]{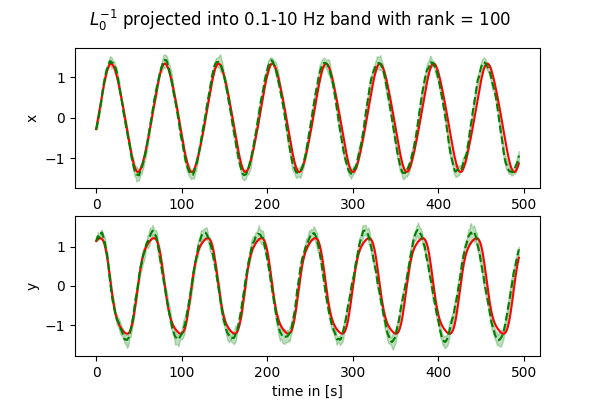}\\
    \caption{Learning the Duffing oscillator on a simple attractor with different Toeplitz based transforms: preserving the spectral properties reduces the model's complexity and improves performance even with highly noisy observations. \textbf{Bottom row:} Toeplitz spectral filtering isolates the relevant frequencies (eigenvalues) in the range $(0.1, 10)$. \textbf{Left:} A low rank ($10$) is sufficient for accurate test-trajectory forecasting. \textbf{Right:} Increasing the rank to $100$ provides no significant improvement and instead seems to slightly increase noise-related variability.
}
    \label{fig:Duffing_2}
\end{figure}

\textbf{Strange attractor (chaotic system).} We now examine the Duffing oscillator with parameters $\alpha=-1$, $\beta=1$, $\gamma=0.5$, $\delta=0.3$ and $\omega=1$, which is known to exhibit a strange attractor and chaotic dynamics \cite{ guckenheimer2013nonlinear}. In this regime, the $\Liishort$-spectrum of the infinitesimal generator $\generator$ becomes continuous, specifically $\Spec(\generator)=i\R$ in the extended phase space, making long-term prediction infeasible. The relevant dynamical information is instead encoded in the behavior of the generator’s resolvent \cite{mezic2020spectrum}. Since no practical resolvent estimation method can achieve uniform error bounds in the operator norm, it is natural to study spectral concentration and decay properties of physically relevant observables. In particular, for a chosen observable $f \in \Lii$, the mapping  
\[
\theta \mapsto \|(\mu+i2\pi\theta-\generator)^{-1} f\|_{\Liishort}, \qquad \theta \in (0,1],
\]
termed the \textit{resolvent response} of $f$, reveals how the system’s dynamics project onto different frequencies.

For the Duffing oscillator, a natural observable is the velocity \(y = \dot{x}\). With the parameters above, the resolvent response of velocity is expected to show pronounced peaks at the forcing frequency and its harmonics, i.e., \(2\pi\theta \approx 1, 2, 3, \dots\), with decaying amplitude, along with possible subharmonic activity near \(1/2\) and a peak around \(\sqrt{2}\) arising from linearized intrawell oscillations \citep{ueda1980steady}.

Conventional data-driven estimators, such as Koopman RRR, are inherently finite-rank and possess a discrete eigenvalue decomposition. These discrete spectra do not directly reflect the continuous spectral structure of \(\generator\) on the attractor. Nevertheless, one can formally transform the eigendecomposition to approximate the resolvent response. In Figure~\ref{fig:Duffing_3} (right), we show the result of applying this approach using a standard transfer operator estimator (Hankel EDMD). The estimator’s spectral decomposition, however, while it recovers the main peak, it over-smooths the velocity’s resolvent response, thus obscuring important propereties (like intrawell oscillations) of the  spectral measure on the chaotic attractor. 

To illustrate how Toeplitz-based spectral filters can yield more accurate estimates of such spectral measures, we trained estimators of \(\anfun(\generator) = (e^{\mu+i2\pi\theta}-e^{\dt L})^{-1}\) and $\anfun(\generator) = (\mu+i2\pi\theta- L)^{-1}$ with $\mu = 0.01$ and over a fine discretization of \(\theta \in (-1,1)\). For each \(\theta\), we approximated the resolvent response of the velocity shown in Figure \ref{fig:Duffing_3}. Notably, both spectral filters recover expected peaks with significantly improved resolution, while the resolvent filter based on the Laplace transform particularly well reveals harmonics of the intrawell oscillations and higher harmonics.

\begin{figure}[h!]
    \centering
   \includegraphics[scale=0.55]{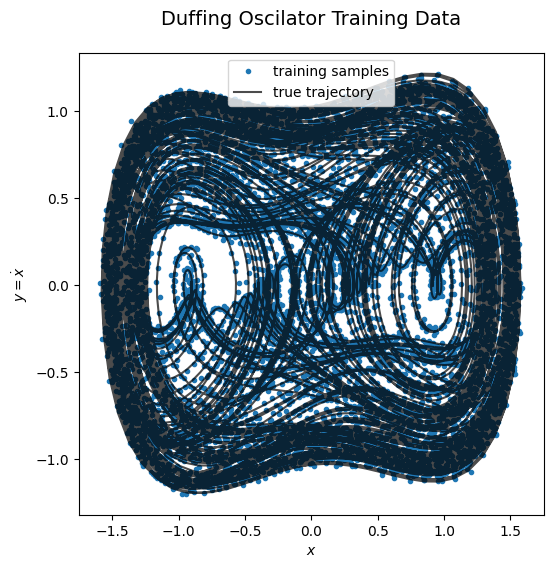}
      \includegraphics[scale=0.55]{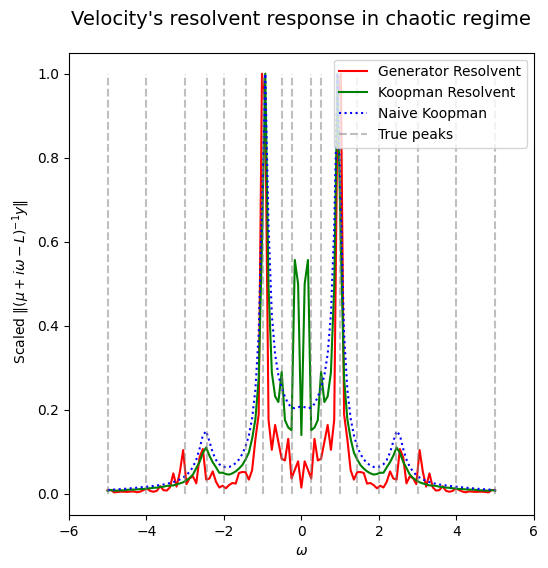}
    \caption{Duffing Oscillator on strange attractor. Training trajectory and samples on the left, and resolvent response of velocity along the Nyquist region on the right.}
    \label{fig:Duffing_3}
\end{figure}

\section{Discussion}
% We presented a first-of-its-kind method to learn continuous Markov semigroups, offering both theoretical guarantees at any time-lag and linear computational complexity in the state dimension, enabling efficient exploration of high-dimensional complex systems. %We presented a new method that is the first to offer both theoretical guarantees at any time-lag and linear computational complexity in the state dimension, enabling the efficient exploration of high-dimensional complex systems. 
% Notably, our method applies to a broad range of Markov processes previously unaddressed, and overcomes the problem of TO-based methods failing to capture slow dynamics when trained on data with high sampling frequency. The main limitation of the current results lies in the assumption of uniform sampling of the (full) state of the system. While the theory can be seamlessly adapted to multiple observations sharing the same non-uniform sampling, an important challenge is to extend it to non-uniformly sampled single trajectory data, as well as to the case of partially observed systems.

We proposed a Toeplitz-based framework for data-driven spectral estimation of Markov
evolution operators from stationary trajectories. By representing analytic transforms of
the generator as $\anfun(L)=\toepsymb(A_{\Delta t})$, the approach unifies the estimation of
transfer operators, resolvents, and frequency-selective filters within a single linear
algebraic structure. Structural properties of the dynamics, such as self-adjointness or
skew-adjointness of the generator, are naturally enforced through symmetry of the Toeplitz
symbol and preserved by construction.
\\
The resulting estimators reduce to weighted time-lagged cross-covariances and admit
efficient implementations via Toeplitz matrix operations, yielding a data-driven analogue
of classical Krylov subspace methods in which polynomial filtering of the evolution
operator is replaced by Toeplitz-weighted time-lag statistics. Statistical consistency follows under
$\beta$-mixing and appropriate truncation of the symbol, clarifying the role of temporal
dependence and sampling. Overall, the framework provides a flexible and scalable approach
to spectral learning for both stochastic and deterministic dynamical systems, including
regimes where direct estimation of the transfer operator is ineffective.
\\
Several directions remain open. Selecting the Toeplitz symbol $\toepsymb$ optimally for a
given task (e.g., forecasting or metastability analysis) is an important problem.
Extending the analysis to noisy observations, partial observability, and non-stationary
data would further broaden applicability. For chaotic deterministic systems with
continuous $\Lii$-spectrum, the extracted spectral features should be interpreted as
resolvent-based or frequency-localized quantities rather than classical eigenvalues; a
precise characterization of this regime remains an interesting direction for future work.

{\bf Acknowledgments.} This work was presented in part at the Applied Linear Algebra Conference in Honor of our dear colleague and friend Zhong-Zhi Bai, whose work has inspired many advances in numerical methods for eigenvalue problems. The authors are grateful for the opportunity to contribute to this important and growing field of research by combining their expertise in numerical linear algebra, statistics, and machine learning.

The work of V.R.K. and M.P. was partially supported by the EU Project ELIAS (grant No. 101120237), and by the European Union – NextGenerationEU and the Italian National Recovery and Resilience Plan through the Ministry of University and Research (MUR), under Project PE0000013 CUP J53C22003010006. The work of K.L. was partially supported by the EU Project ELIAS (grant No. 101120237). 
%We thank the anonymous reviewers for their insightful and valuable feedback.

%\subsubsection*{Acknowledgements}
%All acknowledgments go at the end of the paper, including thanks to reviewers who gave useful comments, to colleagues who contributed to the ideas, and to funding agencies and corporate sponsors that provided financial support. 
%To preserve the anonymity, please include acknowledgments \emph{only} in the camera-ready papers.

%\subsubsection*{References}

%\begin{thebibliography}{}
%\setlength{\itemindent}{-\leftmargin}
%\makeatletter\renewcommand{\@biblabel}[1]{}\makeatother
%\bibliographystyle{abbrvnat}

\bibliographystyle{abbrvnat}  
\bibliography{learning.bib}
%\end{thebibliography}

\end{document}